\newcommand{\ChJarek}[1]{{\color{black} #1}}
\newcommand{\ChDan}[1]{{\color{black} #1}}
   \newtheorem{theorem}[subsubsection]{Theorem}
      \newtheorem*{theorem*}{Theorem}
   \newtheorem{proposition}[subsubsection]{Proposition}
   \newtheorem{lemma}[subsubsection]{Lemma}
   \newtheorem{corollary}[subsubsection]{Corollary}
   \newtheorem*{conjecture*}{Conjecture}
\theoremstyle{definition}
          \newtheorem*{exercise*}{Exercise}
   \newtheorem*{example*}{Example}
   \newtheorem{definition}[subsubsection]{Definition}
   \newtheorem*{definition*}{Definition}
   \newtheorem{remark}[subsubsection]{Remark}
\newcommand{\RR}{{\mathbb{R}}}
\newcommand{\CC}{{\mathbb{C}}}
\newcommand{\QQ}{{\mathbb{Q}}}
\newcommand{\NN}{{\mathbb{N}}}
\newcommand{\GG}{{\mathbb{G}}}
\newcommand{\ZR}{{\mathbf{ZR}}}
\newcommand{\bmu}{{\boldsymbol{\mu}}}
\newcommand{\cA}{{\mathcal A}}
\newcommand{\cC}{{\mathcal C}}
\renewcommand{\cD}{{\mathcal D}}
\newcommand{\cG}{{\mathcal G}}
\newcommand{\cI}{{\mathcal I}}
\newcommand{\cJ}{{\mathcal J}}
\newcommand{\cO}{{\mathcal O}}
\newcommand{\fm}{{\mathfrak m}}
\def\<{\langle}
\def\>{\rangle}
\newcommand{\Spec}{\operatorname{Spec}}
\newcommand{\cProj}{{{\mathcal P}roj}}
\def\:{{\colon}}
\def\.{{,\dots,}}
\newcommand{\double}{\genfrac..{0pt}1
{\raise -1pt\hbox{$\scriptstyle\longrightarrow$}}{\raise 3pt\hbox
{$\scriptstyle\longrightarrow$}}}
\renewcommand{\setminus}{\smallsetminus}
\def\tototi{\mathbin{\mathop{\otimes}\limits^{\raise-1pt\hbox
{$\scriptscriptstyle {\rm L}$}}}}
\def\indlim{\mathop{\vrule width0pt height7pt depth
4pt\smash{\lim\limits_{\raise 1pt\hbox to 14.5pt
{\rightarrowfill}}}}}
\def\projlim{\mathop{\vrule width0pt height7pt depth
4pt\smash{\lim\limits_{\raise 1pt\hbox to 14.5pt
{\leftarrowfill}}}}}
\newcommand\displaceamount{3pt}
\newcommand{\doubledown}{\ar@<\displaceamount>[d]\ar@<-\displaceamount>[d]}
\newcommand{\doubleup}{\ar@<\displaceamount>[u]\ar@<-\displaceamount>[u]}
\newcommand{\doubleright}{\ar@<\displaceamount>[r]\ar@<-\displaceamount>[r]}
\newcommand{\ord}{{\operatorname{ord}}}
\newcommand{\inv}{{\operatorname{inv}}}
\newcommand{\maxinv}{{\operatorname{maxinv}}}
\begin{document}
\title{Functorial embedded resolution via weighted blowings up}

\author[D. Abramovich]{Dan Abramovich}
\address{Department of Mathematics, Box 1917, Brown University,
Providence, RI, 02912, U.S.A}
\email{abrmovic@math.brown.edu}
\author[M. Temkin]{Michael Temkin}
\address{Einstein Institute of Mathematics\\
               The Hebrew University of Jerusalem\\
                Giv'at Ram, Jerusalem, 91904, Israel}
\email{temkin@math.huji.ac.il}

\author[J. W{\l}odarczyk] {Jaros{\l}aw W{\l}odarczyk}
\address{Department of Mathematics, Purdue University\\
150 N. University Street,\\ West Lafayette, IN 47907-2067}
\email{wlodar@math.purdue.edu}

\thanks{This research is supported by  BSF grant 2014365, ERC Consolidator Grant 770922 - BirNonArchGeom, and NSF grant DMS-1759514.
}

\date{\today}

\begin{abstract}
We provide a simple procedure for resolving, in characteristic 0, singularities of  a variety $X$ embedded in a smooth variety $Y$ by repeatedly blowing up the worst singularities, in the sense of stack-theoretic weighted blowings up. No  history, no exceptional divisors, and no logarithmic structures are necessary to carry this out; the steps are explicit geometric operations requiring no choices; and the resulting algorithm is efficient.

 A similar result was discovered independently by McQuillan \cite{Marzo-McQuillan}.

\end{abstract}
\maketitle
\setcounter{tocdepth}{1}


\section{Introduction} 

\subsection{Statement of result} We consider a smooth variety $Y$ of  dimension $n$ over a field $k$ \emph{of characteristic $0$,} and a reduced closed subscheme $X \subset Y$ of pure codimension $c$; or more generally a closed substack $X$ of pure codimension $c$ of a smooth Deligne--Mumford stack $Y$. Our goal is to resolve singularities of $X$ embedded in $Y$, revisiting Hironaka's  \cite[Main Theorem I]{Hironaka}.

Pairs $X \subset Y$ of possibly different dimensions form a category by considering \emph{surjective} morphisms $(X_1 \subset Y_1) \to (X_2 \subset Y_2)$ of pairs where $f:Y_1 \to Y_2$ is \emph{smooth} and $X_1 = X_2 \times_{Y_2} Y_1$ is the pullback of $X_2$. We in fact define a resolution functor on this category; it is functorial for all smooth morphisms, whether or not surjective, when interpreted appropriately. This follows principles of \cite{Wlodarczyk, Kollar, Bierstone-Milman-funct}.

For a geometric point $p\in |X|$ we defined in \cite[\S 2.12.4]{ATW-principalization}  an upper-semi-continuous function, the \emph{lexicographic order invariant}, which we rescale here and write as:
$$\inv_p(X) = (a_1(p),\ldots, a_k(p))\quad \in \quad \QQ_{\geq 0}^{\leq n}\ \  :=\ \  \bigsqcup_{k\leq n}\QQ_{\geq 0}^{k},$$ ordered lexicographically and taking values in a well-ordered subset.
It detects singularities: the invariant is the sequence $\inv_p(X) = (1,\ldots,1)$ of length $c$ if and only if $p\in X$ is smooth, and otherwise it is bigger. Our invariant  $\inv_p$ is compatible with smooth morphisms of pairs, whether or not surjective: $\inv_p(X_1) = \inv_{f(p)} (X_2)$. The invariant and its properties are recalled in Section \ref{Sec:existence}.

We define $$\maxinv(X) = \max_{p\in |X|} \inv_p(X).$$ This is compatible with \emph{surjective} morphisms of pairs.

In Section \ref{Sec:weighted-blowup} we introduce stack-theoretic weighted blowings up $Y' \to Y$ along centers locally of the form $\bar J=(x_1^{1/ w_1},\ldots, x_k^{1/w_k})$, where$(\ell/w_1,\ldots, \ell/w_k)=\maxinv(X)$ for positive integers  $\ell, w_i$,  and $x_1,\ldots, x_n$ is a carefully chosen regular system of parameters. \ChDan{The centers are supported along smooth loci and their blowings up are smooth.}

The aim of this paper is to prove, \ChDan{using these centers,} the following:

\begin{theorem}
\label{maintheorem} There is a functor $F_{er}$, on pairs with smooth surjective morphisms, associating to a pair $X\subset Y$ \ChDan{as above over a field in characteristic 0, with $X$ \emph{singular},}
a  center $\bar J$ with weighted blowing up $Y'\to Y$ and proper transform $F_{er}(X\subset Y) = (X' \subset Y')$, such that \ChDan{$Y'$ is again a smooth stack  and} $\maxinv(X') <\maxinv(X)$.
In particular there is an integer $n$ so that the iterated application $(X_n \subset Y_n):= F_{er}^{\,\circ n}(X\subset Y)  $ of $F_{er}$ has $X_n$ smooth.

The stabilized functor $F_{er}^{\,\circ \infty}(X\subset Y)$ is functorial for all smooth morphisms of pairs, whether or not surjective.
\end{theorem}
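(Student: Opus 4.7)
The plan is: (i) at a point $p$ of maximal invariant, construct a local center $\bar{J}_p$ canonically from $\inv_p(X)$; (ii) show this local recipe glues to a global center $\bar{J}$ and is compatible with smooth surjective morphisms; (iii) verify that the weighted blowing up along $\bar{J}$ strictly decreases $\maxinv$; (iv) conclude termination from well-ordering of the target of $\inv$; and (v) promote functoriality from smooth surjective to arbitrary smooth morphisms for the stabilized functor.

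For step (i), I would fix a geometric point $p$ with $\inv_p(X) = \maxinv(X) = (\ell/w_1, \ldots, \ell/w_k)$. The recursive construction of $\inv_p$ in \cite[\S 2.12.4]{ATW-principalization}, applied to the pair $X \subset Y$, produces along the way a distinguished sequence of maximal-contact hypersurfaces, cut out by local parameters $x_1, \ldots, x_k$. I would define the local center as the weighted ideal $\bar{J}_p = (x_1^{1/w_1}, \ldots, x_k^{1/w_k})$ in the sense of the section on weighted blowings up. The critical input is the canonicity statement: $\bar{J}_p$ is independent of the parameter choices, in the precise sense of weighted centers; this amounts to checking that the weights $1/w_i$ are forced by $\inv_p$ and that any two admissible parameter sequences produce the same weighted ideal. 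Once canonicity is established, the ideals $\bar{J}_p$ patch to a global $\bar{J}$ on the (smooth) locus $\{\inv_p = \maxinv\}$ of $X$ inside $Y$, and smooth-compatibility of $\inv$ immediately gives compatibility with smooth surjective morphisms of pairs.

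Next I would analyze the stack-theoretic weighted blowing up $\pi\colon Y' \to Y$ along $\bar{J}$. By construction the support of $\bar{J}$ is smooth, so $Y'$ is again a smooth Deligne--Mumford stack. The proper transform $X'$ is obtained by dividing the pullback of the ideal of $X$ by the appropriate power of the exceptional divisor. I would verify, working in local toric charts adapted to $x_1, \ldots, x_k$, that every point $p' \in |X'|$ lying over the center satisfies $\inv_{p'}(X') < \maxinv(X)$. The calculation reduces to a chart-by-chart analysis of the transform of the local defining ideal, using that the weights $w_i$ were chosen precisely to neutralize the $\ell$th-order leading piece recorded by $\inv_p$. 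Away from the center $\pi$ is an isomorphism and the invariant is unchanged, while away from the locus $\{\inv = \maxinv\}$ the invariant was already strictly smaller; hence $\maxinv(X') < \maxinv(X)$ globally.

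Termination then follows at once from the well-ordering of the target of $\inv$, and iteration produces an $X_n$ with $\inv \equiv (1,\ldots,1)$, i.e.\ smooth. To extend functoriality from smooth surjective morphisms to arbitrary smooth morphisms, given $f\colon (X_1 \subset Y_1) \to (X_2 \subset Y_2)$ I would run the two resolution processes in parallel, at each stage using smooth-compatibility of $\inv_p$ to identify the base change of the $i$th center on $Y_2$ with the $i$th center on $Y_1$ over the image of $f$; steps of $F_{er}$ on $Y_2$ occurring outside $f(Y_1)$ simply do not interact with $Y_1$, and after stabilization they leave no trace. The main obstacle I anticipate is step (i) --- the precise canonicity of $\bar{J}_p$, since the underlying parameters are highly non-unique --- together with the chart calculation of step (iii), where one must simultaneously track the drop of the invariant and the regularity of the transform across every chart of the stack-theoretic blowing up.
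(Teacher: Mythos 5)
Your roadmap captures the correct shape of the argument and you rightly flag the two crux points, but as written both (i) and (iii) leave genuine gaps that account for most of the work in the paper. For step (i), asserting that ``any two admissible parameter sequences produce the same weighted ideal'' is precisely what must be proved, and it does not follow merely from the weights being forced by $\inv_p$: the local parameters $x_1,\ldots,x_k$ arise through iterated choices of maximal contact, and two such choices need not even be comparable coordinate-by-coordinate. The paper makes canonicity precise by introducing valuative $\QQ$-ideals so that the phrase ``same weighted ideal'' has a choice-independent meaning, uses W\l odarczyk--Koll\'ar homogenization (Theorem~\ref{Th:homogeneous}) to interchange two maximal contact elements, and ultimately characterizes the center as the \emph{unique admissible} center of maximal invariant (Theorem~\ref{Th:center-admissible}(3)). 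Without an admissibility apparatus of this kind, your step (i) remains an unproved assertion rather than a lemma.

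For step (iii), a ``chart-by-chart analysis of the transform of the local defining ideal'' does not establish the invariant drop on its own, because the invariant is defined recursively through coefficient ideals and restriction to maximal contact, and that recursion does not commute naively with the blowing up. The paper's proof of Theorem~\ref{Th:invariant-drops} instead works with the coefficient ideal $\tilde\cC = C(\cI,a_1)$, uses the formal decomposition of Proposition~\ref{Lem:coefficient-description}, and runs an induction in which the proper transform of the maximal contact hypersurface is the blowing up of a \emph{rescaled} center $\bar J_H^{1/(cc')}$ (Lemma~\ref{Lem:root}); it then passes back from $\tilde\cC$ to $\cI$ via the inclusions $\cI'^{(a_1-1)!}\subset\tilde\cC'\subset C(\cI',a_1)$ and the homogeneity $\inv_p(\cI^k)=k\,\inv_p(\cI)$ (Corollary~\ref{Cor:invariant-homogeneous}). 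None of this machinery is visible in a direct toric-chart computation on $\cI$. Separately, your description of $X'$ as ``the pullback divided by the appropriate power of the exceptional divisor'' is the \emph{weak} transform, not the proper transform; they differ once $X$ is not a hypersurface. The paper deliberately separates the two: it proves a principalization theorem (Theorem~\ref{Th:principalization}) using the weak transform, and then deduces embedded resolution by substituting the proper transform $\cI_{X'}\supset\cI'$, observing that this can only decrease $\maxinv$, and checking that when $\maxinv$ reaches $(1,\ldots,1)$ of length $c$ the support of the center is all of $X_n$ so that $X_n$ is smooth. Your proposal would need both of these omitted ingredients to close.
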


 Using standard arguments, one deduces non-embedded resolution --- see Theorem \ref{Th:non-embedded}. Theorem \ref{maintheorem} relies, \ChDan{through a standard argument,} on principalization of ideals, see Theorem \ref{Th:principalization}.

\ChDan{
\subsection{Our quest for the dream algorithm} We call Theorem \ref{maintheorem} ``the dream algorithm" as we have searched for a result of this nature for some time. In this realized dream, we take a singular variety, with no additional structure;  identify its most singular locus --- in this case the locus of maximal invariant; and perform a simple geometric operation --- a stack theoretic weighted blowing up --- to immediately and visibly improve the singularities.

Note that given any projective resolution $X' \to X$ there is a tautological, one might say nightmarish, ``realization of the dream": one can declare after the fact that the worst singular locus is the support of an ideal $J \subset \cO_X$ whose blowing up is $X' \to X$; such exists by \cite[Theorem 2.7.17]{Hartshorne}.  This answer is unsatisfactory,  as it does not help in finding a resolution. Also the ideal $J$ is complicated and the geometry of its blowing up is, to this date, intractable.

Also, given any resolution algorithm involving simple geometric operations --- such as blowing up smooth centers --- one can tautologically encode the state of the algorithm as an added structure on a variety.  Some earlier algorithms --- such as the algorithms we used in the past --- enriched the data of the ideal $\cI_X$ of the embedded variety $X$ at least with a marking $(\cI_X, a)$ indicating to what extent we intend to improve $\cI_X$; Hironaka's \emph{idealistic  exponents}  \cite{Hironaka-idealistic} are of that nature. This is useful additional data, but insufficient to guide the algorithm or determine the locus to be blown up. Other algorithms considered slightly more  structure: marked ideals with divisors, see \cite{Bierstone-Milman}; \emph{basic objects}, see \cite{Encinas-Villamayor}. The more elaborate structure of \emph{mobiles} of Encinas--Hauser, see \cite{Encinas-Hauser},  was developed specifically to guide resolution, and  completely determines their algorithm. While elegant, it is not a simple structure in the sense we seek,  and only used  for the purpose of resolution. The reason algorithms which use smooth centers require such structures is recalled in Section \ref{revissec} below.

In contrast, in the present paper the locus of maximal singularity is determined in simple terms by the variety alone. Its blowing up is a simple geometric operation   not much different from smooth blowing up.

The result is a  Deligne--Mumford stack, hence our algorithm must allow these as input.   This is natural to us, as Deligne--Mumford stacks are used  in much of today's algebraic geometry. Perhaps the same should be  true in today's research on resolution, as we now discuss.

\subsection{Weighted blowings up, stacks, and resolutions} \label{Sec:weighted-efficient}
Weighted blowings up in a scheme theoretic sense have been used in birational geometry for a long time. Varchenko used them to characterize the {log canonical threshold} of a surface, see \cite{Varchenko}, \cite[Theorem 6.40]{Kollar-et-al}. Kawamata \cite[Appendix]{Shokurov} used them to relate discrepancies to indices.  Reid  \cite{Reid} employs them in  the geometry of surfaces. Mart\'{\i}n-Morales \cite{Martin-Morales-zeta,Martin-Morales-Qres} uses them to efficiently study monodromy zeta functions as well as explicit $\QQ$-desingularizations of certain singularities. Artal-Bartolo, Mart\'{\i}n-Morales, and Ortigas-Galindo \cite{Artal-Martin-Ortigas-V, Artal-Martin-Ortigas-Q}  further study the geometry of surfaces. All this on top of the enormous literature on weighted projective spaces.

All these authors show that weighted blowings up are remarkably efficient in computing invariants of singularities. In \cite{Martin-Morales-zeta,Martin-Morales-Qres}, they are shown,  in a wide class of examples, to be remarkably efficient in finding $\QQ$-resolutions, namely modifications with at most quotient singularities.

Most relevant to the present paper, Panazzolo \cite{Panazzolo} used scheme theoretic  weighted blowings up to simplify foliations in dimension three, and   McQuillan and Panazzolo \cite{McQuillan-Panazzolo} revisited the problem using stack theoretic blowings up.  In particular it is shown there that weighted blowings up are unavoidable for their goals. The paper \cite{McQuillan-Panazzolo} led to the paper \cite{Marzo-McQuillan} concurrent to ours.

In our work, stack theoretic modification appeared in  \cite{ATW-principalization} and shown to be unavoidable for functoriality of logarithmic resolution, leading us to investigate weighted blowings up in general.
}

\subsection{Invariants and parameters} The notation for the present invariant $\inv_p(\cI)$ in \cite{ATW-principalization} was $a_1\cdot \inv_{\cI_X,a_1}(p)$, and extends to arbitrary ideal sheaves on logarithmic orbifolds. Here it is applied solely when $Y$ is smooth with trivial logarithmic structure.

\ChDan{Both this invariant and our center of blowing up are present  in earlier work:}

This invariant \ChDan{$(a_1,\ldots,a_k)$} is closely related to invariants developed in earlier papers on resolution of singularities, in particular Bierstone and Milman's  \cite{Bierstone-Milman} and  W\l odarczyk's \cite{Wlodarczyk}. \ChDan{In fact $(a_1,\ldots,a_k)$ is determined by a sequence $(b_1,\ldots,b_k)$ of integers, which is ``interspersed" in  Bierstone and Milman's richer invariant $(H_1,s_1,b_2,\ldots,b_k,s_k)$. Here $b_1$ is determined by the Hilbert--Samuel function $H_1$ and the $s_i=0$ since no divisors are present ---  our invariant is in essence the classical ``year zero invariant". Invariants of similar nature are already introduced in  \cite[Part II]{Hironaka}.}

The center \ChDan{$J = (x_1^{a_1},\ldots, x_k^{a_k})$} can be interpreted in terms of Newton polyhedra, and as such it appears in Youssin's  \cite[\S 1]{Youssin}, with a closely related precedent in Hironaka's \cite{Hironaka-char}. The local parameters $x_1,\ldots,x_k$ in the definition of $J$ were already introduced in \cite{Bierstone-Milman, Encinas-Villamayor, Wlodarczyk, ATW-principalization} as a sequence of iterated hypersurfaces of maximal contact for appropriate coefficient ideals, see Section \ref{Sec:existence}.  \ChDan{In this paper we prove the necessary properties of the invariant $\inv_p(\cI)$ and the center $J$, but many of these properties are directly implied by these cited works.}



%
%

In earlier work the ideal $(x_1,\ldots,x_k)$ was used to locally define the unique center of blowing up satisfying appropriate admissibility and functoriality properties for resolution using smooth blowings up. A central  observation here is that the stack-theoretic weighted blowing up of $(x_1^{1/w_1},\ldots, x_k^{1/w_k})$ is also functorially associated to $X \subset Y$, see Theorem \ref{Th:center-admissible}(3).

As \ChDan{recalled below}, in general, after blowing up the reduced ideal $(x_1,\ldots,x_k)$, the invariant does not drop, and may increase; Earlier work enhanced this invariant by  including data of exceptional divisors and their history, or more recently, logarithmic structures.
 Another  central  observation here  is that, with the use of weighted blowings up, no history, no exceptional divisors, and no logarithmic structures are necessary.

\subsection{Tools and methods} The present treatment requires the theory of Deligne--Mumford stacks.
An  application of Bergh's Destackification Theorem \cite[Theorem 1.2]{Bergh} or  its  generalization \cite[Theorem B]{Bergh-Rydh}  allows one to replace $X_n \subset Y_n$ by a smooth embedded scheme $X'_n \subset Y'_n$ projective over $X\subset Y$, see Theorem \ref{Th:coarse-resolution}.
\ChJarek{ Alternatively the coarse moduli space admits only abelian singularities and can be resolved directly by
combinatorial methods (see \cite{W-factor,Bergh-Rydh,Wlodarczyk-functorial-toroidal}).} \ChDan{Both destackification and this resolution process apply in arbitrary characteristics, as the stabilizer group-schemes involved are tame.%
\footnote{We remind the reader that, by a theorem of de Jong \cite[Corollary 5.15]{dejong-curves}, as stated in \cite[Theorem 1.4]{Bergh-Rydh}, any variety $X$ over a field of any characteristic admits a purely inseparable alteration $X' \to X$ with $X'$  the coarse moduli space of a smooth Deligne--Mumford stack. Thus, if the field is perfect, resolution of $X$ is reduced to the combination of destackification of a possibly wild Deligne--Mumford stack and the resolution of a purely inseparable cover of a smooth scheme.  }}

\ChDan{Our center $J$ can be identified as an \emph{idealistic exponent}, see \cite{Hironaka-idealistic}, which we present here through} the slightly more flexible  formalism of
 \emph{valuative $\QQ$-ideals}, see Section \ref{Sec:valuative-Q-ideal}, or equivalently ideals in the h topology, see Section \ref{Sec:h-topology}. This formalism allows us to show with little effort that centers are unique and functorial. We believe the formalism, which is inspired by existing work on $\QQ$-ideals,  graded families of ideals, and B-divisors, is the correct formalism to consider ideals with rational multiplicities up to blowings up, a topic permeating birational geometry.

We provide a proof of the theorem based on existing theory of resolution of singularities, using concepts and methods from \cite{Hironaka, Villamayor, Bierstone-Milman, Encinas-Villamayor, Wlodarczyk, Kollar, Encinas-Villamayor-Rees, Bierstone-Milman-funct}, among others. 


\subsection{Concurrent and future work}

The present paper is a beginning for several other works, all requiring additional techniques.

The present treatment does not address logarithmic resolutions, a critical requirement of birational geometry. As Section \ref{Sec:not-dnc} shows this does not follow by accident. The necessary modifications are being worked out in a project under way. This requires bringing in the theory of logarithmic structures as in \cite{ATW-principalization}.

The present results were discovered along the way of our work \cite{ATW-relative}, addressing resolution of singularities in families and semistable reduction, again using the logarithmic theory of \cite{ATW-principalization}. It is our plan to introduce the present methods into that project. Moreover, the entire endeavor must be carried out in the appropriate generality of  \emph{qe schemes} --- this is required in order to deduce results in  other geometric categories of interest, as is done in \cite{Temkin-qe,AT2}.

\ChJarek{

\subsection{Examples: comparing smooth and weighted blowings up}\label{revissec}
\subsubsection{Blowing up without weights} It is well-known that an analogous functorial algorithm with smooth blowings up is impossible, see \cite[Claim 3.6.3]{Kollar}.
We give here slightly different examples.

Consider first the  3-dimensional singularity $$x^2 = y_1y_2y_3.$$ The singular locus
consists of the three lines $x=y_i=y_j=0$, for $i\neq j$, meeting at the origin.
Due to the group of permutations acting on the singularity the only possible \emph{invariant} smooth center is the origin:
  $\{x=y_1=y_2=y_3=0\}$, but its blowing up leads to the three  points with  singularities identical to the original one, occurring  on the three $y_i$-charts:
    Writing $$x = x'y'_3,   y_1 = y'_1y'_3,  y_2 = y'_2y'_3, \text{ and } y_3=y_3'$$ we get, after clearing out $y_3^2$, the equation $${x'}^2 = y'_1y'_2y'_3$$ in the new coordinates.

    Thus  functorial embedded desingularization  by  smooth blowings up,  using no additional structure --- called ``history" by some authors --- is simply impossible, as it may lead to an infinite cycle.\footnote{To resolve this, in Hironaka's classical algorithm one must encode $y'_3=0$ as an exceptional divisor --- this is quite natural and useful. One must then note that upon restriction to the first maximal contact $x=0$ the ideal $y_1'y_1'y_3'$ factors an exceptional ``monomial" part $y'_3$. Unfortunately in general the monomial part makes it impossible to proceed with transverse maximal contact. One must then separate it from the order-2 locus with a resolution  subroutine sometimes called ``the monomial stage". Only then one can find further maximal contact elements and proceed.}

This paucity of functorial centers leads to choices which are far from optimal, and resulting in \emph{worse} singularities.

  Consider the equation  $$x^2 = y_1^ay_2^ay_3^a,$$ with $a\geq 2$ instead. The origin is again the unique possible functorial center, and leads  to a singularity of the form
   $x^2 = y_1^ay_2^ay_3^{3a-2}$ in the $y_3$-chart. This visibly is a worse singularity.





\subsubsection{Weighted blowing up}
 The main reason for working with smooth centers in  Hironaka's approach is that we want to keep the ambient space $Y$ smooth.

 A birational geometer knows that the singularity $x^2=y_1y_2y_3$  asks for the blowing up of  $J=(x^2,y_1^3,y_2^3,y_3^3)$.
 This is the observation used by the authors mentioned in Section \ref{Sec:weighted-efficient} above.  But a weighted blowing up in the \emph{schematic} sense gives rise to a singular ambient space $Y$, with abelian quotient singularities. For the classical algorithm this is a non-starter.

 As explained in Section \ref{Sec:weighted-blowup}, we use instead the \emph{stack theoretic} weighted blowing up of the associated \emph{reduced} center ---  in the example $J^{1/6} = (x^{1/3},y_1^{1/2},y_2^{1/2},y_3^{1/2})$. The chart corresponding to $y_3$ is of the form
 $$[\Spec \CC[x',y_1',y_2',u]/\bmu_2],$$
evidently smooth, where $$y_3=u^2, \ x = x' u^3,\  y_1 = y_1' u^2,\  y_2 = y_2' u^2,$$ and $\bmu_2 = \pm 1 $ acts by $(x',y_1',y_2',u) \mapsto  (-x',y_1',y_2',-u)$. The general equations, and their derivation, are given in Section  \ref{Sec:weighted-blowup}.

 Plugging this into the original equation $x^2=y_1y_2y_3$ we get $u^6{x'}^2 = u^6 y_1'y_2',$ where the factor $u^6$ is exceptional, with proper transform
$${x'}^2 =  y_1'y_2'.$$
In other words, the vector of degrees $(2,3,3,3)$  is reduced to  $(2,2,2)$, an immediate and visible improvement. One more blowing up resolve the singularities.\footnote{Hironaka's classical algorithm requires many more blowings up, and, as indicated in the previous note, is  quite technically  involved}

Similarly, our general algorithm, which requires no knowledge of prior steps taken, assigns to a singularity  a canonical  weighted blowing up which improves actual singularities, rather than a  complex associated structure.  Consequently the natural
centers and resulting valuations are much better suited for computations of various birational invariants, such as log canonical thresholds, as recalled in Section \ref{Sec:weighted-efficient}.

\subsection{Efficiency} In \cite{ATW-principalization} we showed how more limited use of stack-theoretic blowings up leads to a vast improvement in efficiency. In examples the present algorithm is remarkably efficient, with great improvements even on
 \cite{ATW-principalization}. 
 For instance, in the example above, two weighted blowings up suffice. This adds to the evidence recalled in Section \ref{Sec:weighted-efficient}.
Our process is explicitly computable, and an implementation in {\sc Singular} \cite{Singular} is under way.





\subsection{The need for more centers}
Smooth centers with their limitations became an object of  study for possible resolution algorithms in  positive characteristic. The natural  invariants, measured with respect to divisorial coordinates accumulated in the process with smooth centers, behave badly. They lead to numerous phenomena, discovered by Moh \cite{Moh87}, and studied intensely in 
\cite{Abh67,Moh96,Hau96,W08,CP08,CP09,Cut11,BV13,KM16,HP18} 
and many others. However the pathologies  thus discovered  are mostly specific to smooth centers with accumulated divisors and are no longer relevant for the more flexible weighted centers --- especially flexible since we no longer need to keep track of transversality to exceptional loci.

The center we choose here is perfectly attuned to the singularities at hand, at least in characteristic 0: it is the unique center $J$ of maximal invariant which is admissible for the given ideal $\cI$. This description has no mention of characteristic, and one might wonder if, with perhaps an even wider class of centers, one can make inroads into positive characteristic desingularization.

}

\subsection{Acknowledgements} We again mention that  Theorem \ref{maintheorem} was discovered independently by McQuillan \cite{Marzo-McQuillan}. We thank Johannes Nicaise for bringing that to our attention. We thank Michael McQuillan for discussions of past, present and future projects and comparison of approaches (we concluded that the present key results are the same but methods quite different). \ChDan{We also thank Jochen Heinloth, Milena Hering, J\'anos Koll\'ar, Marc Levine, Ronald van Luijk, Ming Hao Quek, David Rydh, and Eugenii Shustin for questions, discussions, and illuminating both theory and existing literature.}

\tableofcontents

\section{Valuative ideals, fractional ideals, and $\QQ$-ideals}
\subsection{Zariski--Riemann spaces} Given an integral noetherian scheme $Y$ we are interested in understanding ideals, and more generally $\QQ$-ideals, as they behave after arbitrary blowing up. For instance the ideals $(x^2,y^2)$ and $(x^2,xy,y^2)$ coincide after blowing up the origin, and a formalism in which they are the same object is desirable.  We propose to work with the Zariski--Riemann space $\ZR(Y)$ of $Y$, the projective limit of all projective birational transformations of $Y$, whose points consist of all valuation rings $R$ of $K(Y)$ extending to a morphism $\Spec R \to Y$.

The space $\ZR(Y)$ carries a constant sheaf $K$, a subsheaf of rings $\cO$ with stalk at $v$ consisting of the valuation ring $R_v$, and a sheaf of ordered groups $\Gamma= K^*/\cO^*$ such that $v: K^* \to \Gamma$ is the valuation. The image $v(\cO\smallsetminus \{0\} ) =: \Gamma_+\subset \Gamma$ is the valuation monoid consisting of non-negative sections of $\Gamma$.

\ChDan{The space $\ZR(Y)$ is quasicompact, see \cite[Proposition 3.2.1]{Temkin-curves}.

If $Y= \cup Y_i$ is reduced but possibly reducible with irreducible components $Y_i$, we define $\ZR(Y):= \sqcup \ZR(Y_i)$.}


\begin{remark} While Theorem \ref{maintheorem} is applied to DM stacks $X \subset Y$, functoriality means that we can always work on an \'etale cover by a scheme $\tilde X \subset \tilde Y$ : the resolution step $F_{er}(X\subset Y)$ is obtained by \'etale descent from $F_{er}(\tilde X\subset \tilde Y)$. In particular we need not introduce $\ZR(Y)$ for a stack. Nevertheless we note that such $\ZR(Y)$ can be constructed as well, be it by \'etale descent, or directly as a limit, or as a suitably normalized fibered product of $Y$ with the Zariski--Riemann space of the coarse moduli space.
\end{remark}

\subsection{Valuative $\QQ$-ideals}\label{Sec:valuative-Q-ideal} By a \emph{valuative ideal} on $Y$ we mean a section $\gamma\in H^0(\ZR(Y), \Gamma_+)$. Every ideal $\cI$ on every birational model $Y' \to Y$, proper over $Y$, defines a valuative ideal we denote $v(\cI)$ by taking the minimal element of the image of $\cI$ in $\Gamma_+$.  Ideals with the same integral closure have the same valuative ideal.
 Every valuative ideal  $\gamma$ defines an ideal sheaf on every such $Y'$ by taking $\cI_\gamma:=\{f\in \cO_{Y'} | v(f) \geq \gamma_v \forall v\}$, which is automatically integrally closed.

By a \emph{valuative fractional ideal} we mean a section $\gamma\in H^0(\ZR(Y), \Gamma)$, not necessarily positive, with similar correspondences. These do not figure in this paper.

The group $\Gamma_\QQ = \Gamma\otimes \QQ$ is also ordered. We denote the monoid of non-negative elements $\Gamma_{\QQ+}$. By a \emph{valuative $\QQ$-ideal} we mean  a section $\gamma\in H^0(\ZR(Y), \Gamma_{\QQ+})$. The definition of  $\cI_\gamma$ extends to this case.
It is a convenient way to consider $\QQ$-ideals, extending the definition of $v(\cI)$: given a finite collection  $f_i\in\cO_Y$ and $a_i \in \QQ_{>0}$ we write
\begin{equation}(f_1^{a_1},\ldots,f_k^{a_k}) \ := \ (\min\{a_i\cdot v(f_i)\})_v\quad \in\quad H^0(\ZR(Y), \Gamma_{\QQ+})\label{Eq:Q-ideal}\end{equation}
for the naturally associated valuative $\QQ$-ideal. When $a_i$ are integers this coincides with $v(f_1^{a_1},\ldots,f_k^{a_k})$.  There is again a similar notion of a valuative fractional $\QQ$-ideal.

As was pointed out by D. Rydh, valuative $\QQ$-ideals are equivalent to effective $\QQ$-Cartier divisors on $\ZR(X)$. Indeed, any section $\gamma$ of $\Gamma_+$ is locally the image of an element of $\cO$, and since $\ZR(X)$ is quasi-compact, finitely many such representatives suffice. \ChDan{Moreover, taking a common birational model $Y'\to Y$ over which all the representative sections are regular, we find that $\gamma$ is an invertible ideal on $Y'$. Allowing denominators, any valuative $\QQ$-ideal $\gamma$ is written,  using the notation of  \eqref{Eq:Q-ideal}, locally on the model $Y'$ as $\gamma = (f^a)$.

\subsection{Idealistic exponents} A valuative $\QQ$-ideal which is represented locally on $Y$ itself as $(f_1^{a_1},\ldots,f_k^{a_k})$ is an \emph{idealistic exponent}. This notion coincides with Hironaka's \cite[Definition 3]{Hironaka-idealistic}  by \cite[Remark (2.2)]{Hironaka-idealistic}. Hironaka's notation $(\cJ, b)$, with $\cJ \subset \cO_Y, b\in \NN$ translates to the valuative $\QQ$-ideal $\cJ^{1/b}$. Hironaka's definition of pullback of an idealistic exponent under a dominant morphism $Y' \to Y$ extends to an arbitrary valuative $\QQ$-ideal. 

As indicated in the next section, these are related to Rees algebras \cite{Encinas-Villamayor-Rees} or graded families of ideals \cite[Section 2.4.B]{LazarsfeldI}. This relationship will be pursued in greater depth elsewhere.
}

\subsection{Centers and admissibility}\label{Sec:centers-admissibility} By a \emph{center} $J$ on $Y$ we mean a valuative $\QQ$-ideal for which there is an affine covering $Y = \cup U_i$ and regular systems  \ChDan{of parameters} $(x^{(i)}_1,\dots,x^{(i)}_k) = (x_1,\dots,x_k)$ on $U_i$ such that   $J_{U_i} = (x_1^{a_1},\ldots,x_k^{a_k})$ for some $a_j \in \QQ_{>0}$ independent of $i$.
\ChDan{In such coordinates it corresponds to a unique monomial valuation associated to the cocharacter $$\left( a_1^{-1},\ldots,  a_k^{-1},0,\ldots,0\right),$$ where  $v(\prod x_i^{c_i}) = \sum_{i=1}^k c_i/a_i. $}

A center $J$ is \emph{admissible} for a valuative $\QQ$-ideal $\beta$ if $J_v\leq \beta_v$ for all $v$. A center is \emph{admissible} for an ideal $\cI$ if it is admissible for the associated valuative $\QQ$-ideal $v(\cI)$.

The center $J$ is \emph{reduced} if $w_i=1/a_i$ are positive integers with $\gcd(w_1,\ldots,w_k)=1$. For any center $J$ we write $\bar J = (x_1^{1/w_1},\ldots,x_k^{1/w_k})$ for the unique reduced center such that $\bar J^\ell = J$ for some $\ell\in \QQ_{>0}$.

In Section \ref{Sec:weighted-blowup} below we define the blowing up of $(x_1^{1/w_1},\ldots,x_k^{1/w_k})$. In Section \ref{Sec:admissibility} we show how admissibility is manifested in terms of this blowing up, and becomes very much analogous to the notion used in earlier resolution algorithms.

\subsection{Relation with the h topology}\label{Sec:h-topology} Valuative $\QQ$-ideals are closely related to ideals in the h topology, where Zariski open coverings and alterations generate a  cofinal collection of  coverings, see \cite[Definition 3.1.5 and Theorem 3.1.9]{Voevodsky}. Indeed if $\cJ$ is an ideal in the h topology represented by an ideal on an h covering  $\{U_i\}$, then the valuative ideals $v(\cJ\cO_{U_i})$ agree on overlaps. Since  for a valuation $v$ on $Y$ and  any valuation $w$ on some $U_i$ over $v$ we have $\Gamma_{\QQ,w} = \Gamma_{\QQ,v}$, this defines a valuative $\QQ$-ideal on $Y$.

\section{Weighted blowings up}\label{Sec:weighted-blowup}

\ChDan{
Stack theoretic projective spectra were considered informally by Miles Reid, introduced officially in  \cite{Abramovich-Hassett} to study moduli spaces of varieties,
and treated in Olsson's book \cite[Section 10.2.7]{Olsson-book}.

Rydh's forthcoming manuscript \cite{Rydh-proj} provides  foundations for stack-theoretic blowings up. The presentation here is rather terse as complete details will appear there.
 The local equations we present here can be found in \cite[Page 167]{Kollar-et-al}, where they are developed for  the study of log canonical thresholds. The graded algebras we present below are special cases of the graded families of ideals discussed in \cite[Section 2.4.B]{LazarsfeldI}, especially Example 2.4.8.
}
\subsection{Graded algebras and their $\cProj$}
Given a quasicoherent graded algebra $\cA = \oplus_{m\geq 0} \cA_m$ on $Y$ with associated $\GG_m$-action defined by $(t, s) \mapsto t^m s$ for  $s \in \cA_m$   we define its stack-theoretic projective spectrum to be $$\cProj_Y\cA := [(\Spec_{\cO_Y} \cA \setminus S_0) /\GG_m],$$ where the vertex $S_0$ is the zero scheme of the ideal $\oplus_{m>0} \cA_m$. When $\cA_1$ is coherent and generates $\cA$ over $\cA_0$ this agrees with the Construction in \cite[II.7, page 160]{Hartshorne}. As usual $\cProj_Y\cA$ carries an invertible sheaf $\cO_{\cProj_Y\cA}(1)$ corresponding to the graded module $\cA(1)$. When $\cA$ is finitely generated over $\cO_Y$ with coherent graded components  the resulting morphism $\cProj_Y\cA \to Y$ is proper.

\subsection{Rees algebras of ideals} If $\cI$ is an ideal, its Rees algebra is $\cA_\cI := \oplus_{m\geq 0} \cI^m$, and the blowing up of $\cI$ is $Y'=Bl_Y(\cI):=\cProj_Y(\cA_\cI)$. It is the universal birational map making $\cI\cO_{Y'}$ invertible, in this case $Y'\to Y$ projective, see Definition \cite[II.7, page 163]{Hartshorne}.

\subsection{Rees algebras of valuative $\QQ$-ideals}
Now let $\gamma$ be a valuative $\QQ$-ideal, and 
define its Rees algebra to be $$\cA_\gamma := \bigoplus_{m\in \NN } \cI_{m\gamma}.$$

The blowing up of $\gamma$ is defined to be $Y' =Bl_Y(\gamma):= \cProj_Y\cA_\gamma$.

\ChDan{At least when $\gamma = (f_1^{a_1},\ldots, f_k^{a_k})$ is an idealistic exponent,}  $Y'\to Y$  satisfies a corresponding universal property. Since we will not use this property in this paper, we just mention that the valuative $\QQ$-ideal $E = \gamma\cO_{Y'}$, in a suitable sense of Zariski--Riemann spaces of stacks, or as an h-ideal, becomes  an \emph{invertible ideal sheaf} on $Y'$. We only show this below for the blowing up of a center.


Note that if $Y_1 \to Y$ is flat and $Y_1' = Bl_Y(\gamma \cO_{Y_1})$ then $Y_1' =Y' \times_Y Y_1$.

\subsection{Weighted blowings up: local equations}\label{Sec:weighted-equations}

Now consider the situation where $\gamma$ is a center of the special form $J= (x_1^{1/w_1},\ldots,x_k^{1/ w_k})$, with $w_i\in \NN$.
In this case the algebra $\cA_\gamma = \bigoplus_{m\in \NN } \cI_{m\gamma}$, with $\cI_{m\gamma} = (x_i^{b_1}\cdots x_n^{b_n} | \sum w_ib_i\geq m)$ \ChDan{is finitely generated. It} is the integral closure inside $\cO_Y[T,T^{-1}]$ of the simpler algebra with generators $(x_i)T^{w_i}$. We can therefore describe $Bl_Y(J)= Bl_Y(\gamma)$, which deserves to be called a stack-theoretic weighted blowing up, explicitly in local coordinates, as follows:

The chart associated to $x_1$ has local variables $u, x_2', \ldots, x_n'$, where
\begin{itemize} \item $x_1= u^{w_1}$,
\item $x_i' = x_i / u^{w_i}$ for $2 \leq i \leq k$, and
\item $x_j' = x_j$ for $j>k$.
\end{itemize}
The group $\bmu_{w_1}$ acts through $$(u, x_2',\ldots,x_k')\quad  \mapsto\quad  (\zeta_{w_1}u,\ \zeta_{w_1}^{-w_2} x_2',\ \ldots,\ \zeta_{w_1}^{-w_k} x_k')$$ and trivially on $x_j', j>k$, giving an \'etale local isomorphism of the chart with $$[\Spec k[u, x_2', \ldots, x_n'] / \bmu_{w_1}].$$ It is easy to see that these charts glue to a stack-theoretic modification $Y'\to Y$ with a smooth $Y'$ and its coarse space is the classical (singular) weighted blowing up.

Write $E=(u)$ for the exceptional ideal. Then $v(E) = (x_1^{1/w_1},\ldots,x_k^{1/w_k})$, and this persists on all charts, in other words the center $(x_1^{1/w_1},\ldots,x_k^{1/w_k})$ becomes an invertible ideal sheaf on $Y'$.

We sometimes, but not always, insist on $\gcd(w_1,\ldots,w_k) = 1$, in which case the center is \emph{reduced}. We will however need to consider the proper transform of the locus $H = \{x_1=0\}$, where it may happen that $\gcd(w_2,\ldots,w_k) \neq 1$.  The relationships are summarized by the following lemma, which follows by considering the charts:

\begin{lemma}\label{Lem:root}  If $J'= (x_1^{1/w_1},\ldots,x_k^{1/ w_k})$ and $J''= (x_1^{1/cw_1},\ldots,x_k^{1/c w_k})$ with $w_i,c$ positive integers, and if  $Y', Y'' \to Y$ are the corresponding blowings up, with $E',E''$ the exceptional divisors, then $Y'' = Y'(\sqrt[c]{E'})$ is the root stack of $Y'$ along $E'$.

Write $H = \{x_1=0\}$, and $H'\to H$ the blowing up of the \emph{reduced} center $\bar J'_H$ associated to $J'_H := (x_2^{1/w_2}, \ldots, x_k^{1/w_k})$, with exceptional $E_H$. Then the proper transform $\tilde H' \to H$ of $H$ via the blowing up of $J''$ is the root stack $H'(\sqrt[(cc')]{E_H})$ of $H'$ along $E_H\subset H'$, where
 \ChDan{$c'= \gcd(w_2,\ldots,w_k)$.}
  Therefore $\tilde H'$ is the blowing up of $\bar {J'_H}^{1/(cc')}$.
\end{lemma}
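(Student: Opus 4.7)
The plan is to reduce both assertions to an explicit chart-by-chart comparison, relying on the local description of weighted blowings up in Section \ref{Sec:weighted-equations} together with the fact that root stacks commute with étale base change and are determined by local data.

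For the first assertion I would work on the $x_i$-chart of $Y'$, which is étale-locally of the form $[\Spec \cO_Y[u, x_2', \ldots, x_n']/\bmu_{w_i}]$ with $x_i = u^{w_i}$ and $E' = (u)$. Setting $u = v^c$ exhibits the $c$-th root stack of this chart along $E'$ as a further $\bmu_c$-quotient of $\Spec \cO_Y[v, x_2', \ldots, x_n']$. Combining with the pre-existing $\bmu_{w_i}$-action gives an extension $1 \to \bmu_c \to G \to \bmu_{w_i} \to 1$, and a choice of primitive $(cw_i)$-th root of unity identifies $G$ with $\bmu_{cw_i}$ acting on $v$ with weight $1$ and on $x_j'$ with weight $-cw_j$. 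This matches the $x_i$-chart description of $Y''$ from Section \ref{Sec:weighted-equations}, and since the identifications are canonical they glue to $Y'' = Y'(\sqrt[c]{E'})$.

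For the second assertion, pull back the ideal of $H = \{x_1=0\}$ to $Y''$. On the $x_1$-chart it becomes $(v^{cw_1})$, so $\tilde H'$ is disjoint from that chart. On the $x_i$-chart with $i \geq 2$, the proper transform is cut out by $x_1' = 0$ and is the $\bmu_{cw_i}$-quotient of $\Spec \cO[u, x_2', \ldots, \widehat{x_i'}, \ldots, x_n']$, with $x_i = u^{cw_i}$ and $x_j = x_j' u^{cw_j}$ for $2 \leq j \leq k$, $j \neq i$. On the parallel side, since $\bar J'_H = (x_2^{c'/w_2}, \ldots, x_k^{c'/w_k})$ has denominators $w_i/c'$ of trivial gcd, the $x_i$-chart of $H'$ is the $\bmu_{w_i/c'}$-quotient of $\Spec \cO[u_H, x_2', \ldots, \widehat{x_i'}, \ldots]$ with $x_i = u_H^{w_i/c'}$. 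Forming the $(cc')$-th root stack along $E_H = (u_H)$ by substituting $u_H = v^{cc'}$ yields exactly $x_i = v^{cw_i}$ and $x_j = x_j' v^{cw_j}$, and the same splitting argument identifies the stabilizer extension $\bmu_{cc'} \cdot \bmu_{w_i/c'}$ with $\bmu_{cw_i}$. The local isomorphisms agree across charts and globalize.

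The final sentence of the lemma then follows by applying the first assertion on $H$ with $\bar J'_H$ in place of $J'$ and $cc'$ in place of $c$, since $\bar J'_H^{1/(cc')}$ is the center with denominators $cw_i$. The one piece of bookkeeping that requires care, throughout the argument, is to verify that the non-canonical splittings $\bmu_{cw_i} \cong \bmu_c \cdot \bmu_{w_i}$ are compatible with the transition between charts; this reduces to the elementary observation that for any positive integers $a,b$, a choice of primitive $(ab)$-th root of unity canonically identifies $\bmu_{ab}$ with the central extension of $\bmu_b$ by $\bmu_a$ given by $\zeta \mapsto \zeta^a$.
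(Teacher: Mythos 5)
Your proof is correct and follows exactly the route the paper has in mind: the paper's own justification is a single phrase, ``which follows by considering the charts,'' and your chart-by-chart comparison fills in those details faithfully. One small remark: the identification of the extension group $G$ of the root stack with $\bmu_{cw_i}$ is actually canonical and requires no choice of root of unity --- $G$ is by construction the fiber product $\bmu_{w_i}\times_{\GG_m,\,\theta_c}\GG_m=\{t\in\GG_m : t^{cw_i}=1\}=\bmu_{cw_i}$, with the projection to $\bmu_{w_i}$ given by $t\mapsto t^c$. In particular there is no issue of compatibility of splittings across charts to worry about; the concern you flag at the end is vacuous. With that simplification noted, the argument matches the paper.
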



\subsection{Derivation of equations} Let us derive the description in Section \ref{Sec:weighted-equations} above. Write $y_i = x_iT^{w_i}$. The $x_1$-chart is the stack $[\Spec \cA[y_1^{-1}] / \GG_m]$. The slice $W_1:= \Spec \cA[y_1^{-1}]  / (y_1-1)$ is stabilized by $\bmu_{w_1}$, so the embedding $W_1 \subset \Spec \cA[y_1^{-1}]$ gives rise to a morphism $\phi:[W_1 /\bmu_{w_1}] \to  [\Spec \cA[y_1^{-1}] / \GG_m]$. This is an isomorphism: the equation $u^{w_1} = x_1$ describes a $\bmu_{w_1}$-torsor on $\Spec \cA[y_1^{-1}]$ mapping to $W_1$ equivariantly via $T \mapsto u^{-1}$. The resulting morphism $\Spec \cA[y_1^{-1}] \to [W_1 /\bmu_{w_1}]$ descends to $[\Spec \cA[y_1^{-1}] / \GG_m] \to  [W_1 /\bmu_{w_1}]$ which is an inverse to $\phi$.

It thus remains to show that  $[W_1 /\bmu_{w_1}]$ has the local description above. Since $T^{-w_1} = y_1^{-1} x_1\in \cA[y_1^{-1}]$ and $\cA$ is integrally closed in  $ \cO_Y[T,T^{-1}]$ we have $u := T^{-1} \in \cA[y_1^{-1}]$, and its restriction to $W_1$ satisfies $u^{w_1} = x_1$. For $i=2,\ldots,k$ we write $x_i'$ for the restriction of $y_i$, obtaining $x_i' = x_i / u^{w_i}$. Now $W_1$ is normal and finite birational over $\Spec k[u, x_2',\ldots,x_n']$, hence they are isomorphic.

\subsection{Weighted blowings up: local toric description}

Again working locally, assume that $Y = \Spec k[x_1,\ldots,x_n]$. It is the affine toric variety associated to the monoid $\NN^n \subset \sigma = \RR_{\geq 0}^n$. Here the generator $e_i$ of $\NN^n$ corresponds to the monomial valuation $v_i$ associated to the divisor $x_i=0$, namely $v_i(x_j) = \delta_{ij}$.

The monomial $x_i^{1/w_i}$ defines the linear function on $\sigma$ whose value on $(b_1,\ldots,b_n)$ is its valuation $b_i/w_i$. The ideal $(x_1^{1/w_1},\ldots,x_k^{1/w_k})$ thus defines the piecewise linear function $\min_i\{b_i/w_i\}$, which becomes linear precisely on the star subdivision $\Sigma=v_{\bar J}\star\sigma$ with
$$v_{\bar J} =  (w_1,\ldots,w_k,0,\ldots,0).$$ This defines the scheme theoretic weighted blowing up $\bar Y'$. Note that this cocharacter $v_{\bar J}$ is a multiple of the valuation associated to the exceptional divisor of the center.

Since $v_{\bar J}$ is assumed integral, we can apply the theory of toric stacks \cite{BCS, FMN, GSI,GSII,Gillam-Molcho-stacky}. We have a smooth toric stack $Y' \to \bar Y'$ associated  to the same fan $\Sigma$ with the cone $\sigma_i= \langle v_{\bar J},e_1,\ldots, \hat e_i,\ldots,e_n\rangle$ endowed with the sublattice $N_i \subset N$ generated by the elements $ v_{\bar J},e_1,\ldots,\hat e_i,\ldots,e_n$, for all $i=1,\ldots,k$. This toric stack is precisely the stack theoretic weighted blowing up $Y' \to Y$. One can derive the equations in Section \ref{Sec:weighted-equations} from this toric picture.

\section{Coefficient ideals}\label{Sec:coefficient}
\subsection{Graded algebra and coefficient ideals}

Fix an ideal $\cI$ and an integer $a>0$.
We use the notation of  \cite{ATW-principalization}, except that we use the saturated coefficient ideal as in \cite{Kollar, ATW-relative}, which is consistent with the Rees algebra approach of \cite{Encinas-Villamayor-Rees}: Consider the graded subalgebra \ChDan{$\cG= \cG(\cI,a) \subseteq \cO_Y[T]$} generated by placing $\cD^{\leq a-i}\cI$ in degree $i$.  \ChDan{Its graded pieces are
$$\cG_j = \sum_{\sum_{i=0}^{a-1} (a-i)\cdot b_i \ \geq j} \cI^{b_0}\cdot (\cD^{\leq 1} \cI)^{b_1} \cdots (\cD^{\leq a-1} \cI)^{b_{a-1}},$$
where the sum runs over all monomials in the ideals $\cI ,\ldots, \cD^{\leq a-1} \cI$}
 of weighted degree $$\sum_{i=0}^{a-1} (a-i)\cdot b_i \quad \geq \quad j.$$

 \ChDan{ The product rule, and the trivial inclusion $\cD^{\leq 1}\cD^{\leq a-1}\cI \subset (1)$, imply that $\cD\cG_{k+1} \subset \cG_k$ for $k\geq 0$.
  }


\begin{definition} Let $\cI \subset \cO_Y$ \ChDan{and $a\geq 1$ an integer}. Define the \emph{coefficient ideal} $$C(\cI,a)  := \cG_{a!}.$$
\end{definition}

The formation of $\cG$ and $C(\cI,a)$ is functorial for smooth morphisms: if $Y_1 \to Y$ is smooth then $C(\cI,a)\cO_{Y_1}=C(\cI\cO_{Y_1},a)$. This follows since the formation of $\cD^{\leq 1} \cI$, ideal product, and ideal sum are all functorial.


\ChDan{For the rest of the section we assume that $\cI \subset \cO_Y$ has maximal order $\leq a$.}

\subsection{Maximal contact} Recall that an element  $x\in \cD^{\leq a-1}\cI$ \ChDan{which is a regular parameter} at $p\in Y$ is called  \emph{a maximal contact element} at $p$, and its vanishing locus \emph{a maximal contact hypersurface}. The coefficient ideal combines sufficient information from derivatives of $\cI$ so that when one restricts $C(\cI,a)$ to a hypersurface of maximal contact no necessary information is lost.

For completeness, any parameter is a maximal contact element for the unit ideal.

\subsection{Invariance}
Now consider $\cI\subset \cO_Y$ and assume $x_1 \in \cD^{\leq a-1} \cI$ is a maximal contact element at $p\in Y$. The ideals $\cG_i$ are all MC-invariant in the sense of \cite[\S 3.53]{Kollar}: $\cG_1\cdot \cD^{\leq 1}\cG_i \subset \cG_i$, hence they are \emph{homogeneous} in the sense of \cite{Wlodarczyk}:

\begin{theorem}\label{Th:homogeneous} Let $x_1,x_1'$ be maximal contact elements at $p$, and $x_2,\ldots,x_n\in \cO_{Y,p}$ such that $(x_1,x_2,\ldots,x_n)$ and  $(x_1',x_2,\ldots,x_n)$ are both regular sequences \ChDan{of parameters}. There is a scheme $\tilde Y$ with point $\tilde p\in \tilde Y$  and two morphisms $\phi,\phi': \tilde Y \to Y$  with $\phi(\tilde p) = \phi'(\tilde p) = p$, both \'etale at $p$, satisfying
\begin{enumerate}
\item $\phi^*x_1 = {\phi'}^* x_1'$,
\item $\phi^*x_i = {\phi'}^* x_i$ for $i=2,\ldots,n$, and
\item $\phi^*\cG_i = {\phi'}^* \cG_i$.
\end{enumerate}
\end{theorem}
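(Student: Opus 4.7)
My plan is to realize the difference between the two coordinate systems by an étale-local automorphism of $\tilde Y$ and deduce condition (3) from the MC-invariance $\cG_1\cdot\cD^{\leq 1}\cG_i\subset\cG_i$ recalled immediately before the theorem. The decisive preliminary observation is that $h:=x_1'-x_1$ lies in $\cG_1$, since both $x_1,x_1'\in\cD^{\leq a-1}\cI=\cG_1$; this is the element that couples the coordinate change to MC-invariance.

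For the construction of $(\tilde Y,\tilde p,\phi,\phi')$, I would take $\tilde Y$ to be a sufficiently small étale neighborhood of $p$ in $Y$ with marked point $\tilde p$ above $p$, on which both $(x_1,\ldots,x_n)$ and $(x_1',x_2,\ldots,x_n)$ lift (via the étale structure map $\phi$) to regular parameter systems at $\tilde p$. By Hensel's lemma, there is an étale-local automorphism $\sigma$ of $\tilde Y$ fixing $\tilde p$ with $\sigma^*(\phi^*x_1')=\phi^*x_1$ and $\sigma^*(\phi^*x_i)=\phi^*x_i$ for $i\geq 2$, since these data uniquely determine $\sigma$ on a regular parameter system. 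Setting $\phi':=\phi\circ\sigma$, properties (1) and (2) are then immediate from the pullback calculation: ${\phi'}^*(x_1')=\sigma^*(\phi^*x_1')=\phi^*x_1$, and similarly for the $x_i$ with $i\geq 2$.

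The heart of the proof is (3), which reduces to $\sigma^*(\phi^*\cG_i)=\phi^*\cG_i$. I would use an exponentiation argument based on MC-invariance: the derivation $D:=h\cdot\partial/\partial x_1$ (acting on the complete local ring at $\tilde p$) sends any $f\in\cG_i$ to
$$Df=h\cdot\frac{\partial f}{\partial x_1}\in\cG_1\cdot \cD^{\leq 1}\cG_i\subset \cG_i.$$
By induction, $D^k(\cG_i)\subset\cG_i$ for all $k\geq 0$, so the formal automorphism $\exp(D)$ preserves $\cG_i$. Since $\sigma$ and $\exp(-D)$ satisfy compatible first-order conditions at $\tilde p$ and both fix $x_2,\ldots,x_n$, one matches them up to a further automorphism of the same form (generated by another element of $\cG_1$ times $\partial/\partial x_1$) and iterates the argument to conclude $\sigma^*(\cG_i)\subset\cG_i$; the reverse inclusion follows by symmetry in $x_1,x_1'$.

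The principal technical obstacle is passing from the formal level to an honest étale realization: $\sigma$ and the flow of $D$ live naturally on the completion at $\tilde p$, while the theorem requires a morphism of étale neighborhoods. This is standard bookkeeping via Artin approximation or an explicit Hensel-type argument, and does not touch the algebraic heart of the proof, which is contained in the one-line calculation $Df\in\cG_i$ above.
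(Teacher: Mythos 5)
The paper itself gives no proof of this theorem; it cites Koll\'ar [Theorem 3.92] and W{\l}odarczyk [Lemma 3.5.5], so there is no internal argument to compare against. Your attempt does identify the decisive algebraic point --- that $h:=x_1'-x_1$ lies in $\cG_1$, and that MC-invariance $\cG_1\cdot\cD^{\leq1}\cG_i\subset\cG_i$ forces the formal change of $x_1$ to preserve each $\cG_i$ --- and realizing the formal automorphism \'etale-locally is indeed the standard route.

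The exponentiation step, however, has a real gap. The substitution $\tau\colon f\mapsto f(x_1+h,x_2,\ldots,x_n)$ expands by Taylor's formula as $\sum_k\frac{h^k}{k!}\partial_1^kf$, which is \emph{not} $\exp(D)=\sum_k\frac{1}{k!}(h\partial_1)^kf$ unless $\partial_1h=0$; and here $\partial_1h(p)=\partial x_1'/\partial x_1(p)-1$ need not vanish. Worse, when $\partial_1h$ has a nonzero constant term, $D$ does not raise $\fm$-adic order, so the series $\exp(D)$ does not converge: the time-one flow would involve exponentials of constants, unavailable over a general characteristic-$0$ field. Finally, your ``match and iterate'' correction is circular as stated: to verify that the correction term lies in $\hat\cG_1$ one already needs $\tau(\hat\cG_1)\subset\hat\cG_1$, an instance of the claim being proved. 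The repair is to drop $\exp(D)$ and argue term by term on the Taylor expansion: each $\frac{h^k}{k!}\partial_1^kf$ lies in $\cG_i\hat\cO_{Y,p}$ by the iterated bound $\cG_1^k\cdot\cD^{\leq k}\cG_i\subset\cG_i$, which follows from MC-invariance via the Leibniz identity $g\,\partial\psi=\partial(g\psi)-(\partial g)\psi$ (showing that if $\cJ$ is MC-invariant then so is $\cD^{\leq1}\cJ$, then telescope). Since $h\in\fm_p$, the $k$-th term lies in $\fm_p^k$, giving convergence in the closed ideal $\cG_i\hat\cO_{Y,p}$; symmetry in $x_1,x_1'$ yields equality, and the \'etale realization is as you describe.
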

This is \cite[Theorem 3.92]{Kollar}, generalizing \cite[Lemma 3.5.5]{Wlodarczyk}.\footnote{These are the easier properties of coefficient ideals. We emphasize that we do not require the harder part (4) of \cite[Lemma 3.5.5]{Wlodarczyk} or \cite[Theorem 3.97]{Kollar} describing the behavior after a sequence of blowings up.}


\subsection{Formal decomposition}
We now pass to formal completions. Extending to a regular sequence \ChDan{of parameters} we write $\hat\cO_{Y,p}=k\llbracket x_1,\ldots,x_n\rrbracket$. We use the reduction homomorphism $k\llbracket x_1,\ldots,x_n\rrbracket \to k\llbracket x_2,\ldots,x_n\rrbracket$ and the inclusion $k\llbracket x_2,\ldots,x_n\rrbracket \to k\llbracket x_1,\ldots,x_n\rrbracket$.

We have $\cG_j = (x_1^j) + (x_1^{j-1}) \cG_1 + \cdots + (x_1) \cG_{j-1} + \cG_j$ since the ideal on the left contains every term on the right. Write $\bar\cC_j = \cG_{j} k\llbracket x_2,\ldots,x_n\rrbracket$ via the reduction homomorphism and $\tilde\cC_j = \bar\cC_j k\llbracket x_1,\ldots,x_n\rrbracket$ via inclusion.


\begin{proposition}
 \label{Lem:coefficient-description} After passing to completions, \ChDan{we have $$\cG_j = (x_1^j) + (x_1^{j-1}) \tilde\cC_1 + \cdots + (x_1) \tilde\cC_{j-1} + \tilde\cC_j,$$} in particular
 $$C(\cI,a) = (x_1^{a!}) + (x_1^{a!-1} \tilde\cC_{1})  + \cdots + (x_1 \tilde\cC_{a!-1}) + \tilde\cC_{a!}.$$
\end{proposition}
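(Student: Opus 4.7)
The second equation is the case $j = a!$ of the first, since $C(\cI,a) = \cG_{a!}$, so it suffices to establish the formula for $\cG_j$. Work in the completion $R = k\llbracket x_1,\ldots,x_n\rrbracket$ with $R_0 = k\llbracket x_2,\ldots,x_n\rrbracket$, and set $I_j := \sum_{i=0}^{j} x_1^{j-i}\tilde\cC_i$ under the convention $\tilde\cC_0 = R$, so that the $i=0$ summand is $(x_1^j)$. Beyond the grading $\cG_i\cdot\cG_k \subset \cG_{i+k}$ and the inclusion $x_1 \in \cD^{\leq a-1}\cI \subset \cG_1$, the only input I need is the iterated derivation property $\partial_{x_1}^m \cG_j \subset \cG_{j-m}$, which follows at once by iterating the relation $\cD\cG_{k+1} \subset \cG_k$ already recorded in the excerpt.

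\textbf{Forward inclusion $\cG_j \subset I_j$.} Given $f \in \cG_j$, expand in Taylor series $f = \sum_{m \geq 0} x_1^m f_m$ with $f_m \in R_0$. Because $\chara k = 0$, one has $f_m = \tfrac{1}{m!}(\partial_{x_1}^m f)|_{x_1=0}$, and the derivation property together with reduction modulo $x_1$ gives $f_m \in \bar\cC_{j-m}$ for $0 \leq m \leq j$. Hence $x_1^m f_m \in x_1^m \tilde\cC_{j-m} \subset I_j$ for each such $m$, while the tail lies in $(x_1^{j+1}) \subset x_1^j \tilde\cC_0 \subset I_j$. Since $I_j$ is closed in the $\fm$-adic topology of the Noetherian ring $R$, the convergent sum $f$ lies in $I_j$.

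\textbf{Reverse inclusion $I_j \subset \cG_j$.} I prove $x_1^{j-i}\tilde\cC_i \subset \cG_j$ by induction on $i$. The base case $i=0$ is immediate, since $x_1^j \in \cG_j$ and $\cG_j$ is an ideal. For the step, fix $g \in \bar\cC_i$ and a lift $\tilde g \in \cG_i$ with $\tilde g|_{x_1=0} = g$; expand $\tilde g = \sum_{m \geq 0} x_1^m a_m$ with $a_m \in R_0$ and $a_0 = g$. The same derivation/Taylor argument as above forces $a_m \in \bar\cC_{i-m}$ for $0 \leq m \leq i$. Setting $h := \sum_{m \geq 0} x_1^m a_{m+1}$, one has $\tilde g = g + x_1 h$, hence
\begin{equation*}
x_1^{j-i}\, g \;=\; x_1^{j-i}\,\tilde g \;-\; \sum_{m \geq 0} x_1^{j-i+1+m}\, a_{m+1}.
\end{equation*}
The first summand is in $\cG_{j-i}\cdot\cG_i \subset \cG_j$ because $x_1^{j-i} \in \cG_{j-i}$. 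Each term of the series splits into two cases. For $0 \leq m \leq i-1$, rewrite the exponent as $j-(i-m-1)$; since $a_{m+1} \in \bar\cC_{i-m-1} \subset \tilde\cC_{i-m-1}$ and $i-m-1 < i$, the inductive hypothesis places that term in $\cG_j$. For $m \geq i$ the exponent exceeds $j$, so the term lies in $(x_1^{j+1}) \subset \cG_{j+1} \subset \cG_j$ (using the monotonicity $\cG_{j+1} \subset \cG_j$). Closedness of $\cG_j$ in the $\fm$-adic topology then yields $x_1^{j-i} g \in \cG_j$, completing the induction.

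\textbf{Anticipated obstacle.} The only substantive point is the bookkeeping of the Taylor expansions of $f$ and $\tilde g$, together with the characteristic-zero hypothesis needed to invert the factorials $m!$ and thereby identify the individual Taylor coefficients with reductions of $\partial_{x_1}^m$ applied to elements of $\cG_j$ or $\cG_i$. Everything else is formal.
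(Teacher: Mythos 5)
Your proof is correct, and its core tool --- using $\partial_{x_1}^m\cG_j\subset\cG_{j-m}$ (together with characteristic zero) to place the Taylor coefficients of $f\in\cG_j$ in $\bar\cC_{j-m}$ --- is the same engine that drives the paper's proof. The packaging differs slightly: the paper runs a single induction on $j$, using the decomposition of $\cG_j/(x_1^{M+1})$ into eigenspaces of the Euler operator $x_1\partial/\partial x_1$ to get the forward inclusion, and then disposing of the reverse inclusion in one line as $(x_1)\tilde\cC_{j-1}+\cdots+(x_1^j)=(x_1)\cG_{j-1}\subset\cG_j$ together with $\bar\cC_j=\cG_j^{(0)}\subset\cG_j$. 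You instead get the forward inclusion directly (no induction on $j$) by raw Taylor coefficient extraction, and pay for it with a nested induction on $i$ in the reverse inclusion, where you re-run the Taylor argument on a lift $\tilde g\in\cG_i$ of $g\in\bar\cC_i$. Your route is a little longer on the reverse side but arguably more elementary, avoiding the explicit eigenspace formalism; the paper's route is tighter because the inductive hypothesis at level $j-1$ hands you the reverse inclusion for free. Both are valid; the underlying ideas coincide.

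One small stylistic remark: when you invoke closedness of $I_j$ and $\cG_j$ in the $\fm$-adic topology to handle the infinite Taylor sums, it is worth saying explicitly that you are working with the extended ideals in the complete Noetherian local ring $k\llbracket x_1,\dots,x_n\rrbracket$, where Krull's theorem guarantees every ideal is closed; the paper sidesteps this by truncating modulo $(x_1^{M+1})$ for $M>j$, which accomplishes the same thing with a finite-dimensional argument.
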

\ChDan{
\begin{proof}
We write $x= x_1$. Apply induction on $j$, noting that $\cG_0 = (1)$ so that we may start with $(1) = \tilde\cC_0$ and inductively assume the equality holds up to $j-1$.

For an integer $M>j$ the ideals $\cG_j\supset (x^M)$ are stable under  the linear operator $x\partial/\partial x$. Hence the quotient $\cG_j / (x^M)$ inherits a linear action, with $m$-eigenspaces we denote  $x^m \cdot \cG_j^{(m)} \subset x^m k\llbracket x_2,\ldots,x_n\rrbracket$, giving

$$ \cG_j / (x^{M+1}) \quad = \quad \cG_j^{(0)}\  \oplus \ x \cdot \cG_j^{(1)}\  \oplus \dots  \oplus\  x^{m} \cdot \cG_j^{(m)}\  \oplus \dots  \oplus \ x^{M} \cdot \cG_j^{(M)},$$ with $ \cG_j^{(m)} \subset k\llbracket x_2,\ldots,x_n\rrbracket$ and equality holding for $m\geq j$. Note that $\cG_j^{(0)} = \bar \cC_j$.

  The subspaces  $\cG_j^{(m)} \subset k\llbracket x_2,\ldots,x_n\rrbracket$ are independent of the choice of $M\geq m$. Moreover $x^j \cdot \cG_j^{(m)} \subset \cG_j \cap x^j \cdot k\llbracket x_2,\ldots,x_n\rrbracket$, so that $$\cG_j^{(m)} \ = \ \frac{\partial^j}{\partial x^j} (x^j \cdot \cG_j^{(m)})\  \subset \ \cG_{j-m} \cap k\llbracket x_2,\ldots,x_n\rrbracket\  \subset \ \bar \cC_{j-m}.$$
  Taking ideals we obtain
  $$\cG_j \quad\subset \quad \cG_j^{(0)}\  + \ (x) \tilde\cC_{j-1}\ + \dots  +\  (x^{j-1})  \tilde\cC_1\ + (x^j) .$$
  Induction gives $$(x) \tilde\cC_{j-1}\ + \dots  +\  (x^{j-1})  \tilde\cC_1\ + (x^j) \quad  =\quad  (x) \cG_{j-1} \quad \subset\quad  \cG_j.$$ Together with $\bar\cC_j = \cG_j^{(0)} \subset \cG_j$ the equality follows.
\end{proof}

%
%
%
}

\section{Invariants, centers, and admissibility}

\subsection{Existence of invariants and centers}\label{Sec:existence}
Fix an ideal $\cI=\cI[1]$ and $p\in Y$. We define a finite sequence of integers $b_i$, rational numbers $a_i$, and parameters $x_i$.

  If $\cI_p=(0)$ set $\inv_p(\cI)=()$ to be the empty sequence, with an empty sequence of parameters.

 Otherwise set $a_1=b_1:=\ord_p(\cI[1])$, and take the parameter $x_1$ to be a maximal contact element at $p$. Inductively one writes $\cI[{i+1}] = C(\cI[{i}], b_i)|_{V(x_1,\ldots,x_i)}$, the restricted coefficient ideal, with order $\ord_p(\cI[{i+1}]) = b_{i+1}$, one sets $a_{i+1} = b_{i+1}/(b_i-1)!$, and one takes $x_{i+1}$ a lifting to $Y$ of the maximal contact element for $\cI[{i+1}]$.

 Equivalently, $\inv_p(\cI[1]) = \left(a_1, \inv_p(\cI[2]) / (a_1-1)!\right)$ the concatenation, and $x_2,\ldots$ are lifts of the parameters for $\cI[2]$. In the notation of the previous section $\cI[2] = \bar \cC_{a_1!}$.

 Note in particular that if $\cI[2]=0$ then $\inv_p(\cI) = (a_1)$ with parameter $x_1$.
 
\ChDan{We remark that once Theorem \ref{Th:center-admissible}  is proven, we can use a better definition: this is the maximal invariant of a center admissible for $\cI$.}

\ChDan{Invariants are ordered lexicographically, with truncated sequences considered larger, for instance $$(1,1,1)<(1,1,2)<(1,2,1)<(1,2)<(2,2,1).$$ }The invariant takes values in the well-ordered subset $\Gamma_n$, \ChDan{since it is order-equivalent to $(b_1,\ldots,b_k)$. Explicitly write}  $\Gamma_1 = \NN^{\geq 1}$ and $$\Gamma_n\quad  = \quad \Gamma_1 \ \ \sqcup\ \ \bigsqcup_{a\geq 1} \{a\} \times \frac{\Gamma_{n-1}}{(a-1)!}.$$



 \begin{theorem}[\cite{ATW-principalization}]\label{Th:invariant} The invariant $\inv_p$ is independent of the choices. It is upper-semi-continuous. It is functorial for smooth morphisms: if $f:Y_1 \to Y$ is smooth and $p' \in Y'$ then $\inv_{p'}(\cI\cO_{Y_1}) = \inv_{f(p')}(\cI)$.
\end{theorem}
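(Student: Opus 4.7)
The plan is induction on $\dim_p Y$ (equivalently on the length of the invariant) via the recursive formula $\inv_p(\cI[1]) = (a_1, \inv_p(\cI[2])/(a_1-1)!)$. Since $a_1 = \ord_p(\cI)$ depends only on $\cI$ and $p$, all three claims---independence of choices, functoriality under smooth maps, and upper-semi-continuity---reduce to the analogous claims for $\cI[2]$ on the maximal contact hypersurface $V(x_1)$.

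For \emph{functoriality} under smooth $f:Y_1 \to Y$, the essential point is that every operation entering the construction commutes with smooth pullback: formation of $\cD^{\leq i}\cI$, products and sums of ideals, and hence $\cG(\cI,a)$ and $C(\cI,a)$, as already noted in Section \ref{Sec:coefficient}. The order $\ord_p(\cI)$ is preserved under smooth morphisms, so $b_1(p') = b_1(f(p'))$. If $x_1$ is a maximal contact element at $f(p')$, then $f^*x_1 \in \cD^{\leq b_1-1}(\cI\cO_{Y_1})$ is still a regular parameter (regularity being preserved under smooth morphisms), hence remains a maximal contact element at $p'$, and the restriction $f^{-1}V(x_1) \to V(x_1)$ is smooth of the same relative dimension. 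Induction on dimension then closes this case. For \emph{independence of choices}, Theorem \ref{Th:homogeneous} supplies, for any two maximal contact elements $x_1, x_1'$ at $p$, étale maps $\phi,\phi': \tilde Y \to Y$ with $\phi^*x_1 = {\phi'}^*x_1'$ and $\phi^*\cG_i = {\phi'}^*\cG_i$. Combined with functoriality under étale morphisms just established, this reduces to the case $x_1 = x_1'$, in which $\cI[2]$ is literally the same ideal on the same hypersurface; induction on dimension finishes the argument.

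For \emph{upper-semi-continuity}, the order function $\ord_p(\cI)$ is classically upper-semi-continuous, so $\{p : \ord_p(\cI) \geq b\}$ is closed for every $b$, and the stratum where $\ord_p(\cI) = b_1$ is locally closed. Near a point $p$ of this stratum a maximal contact element $x_1$ exists étale-locally, and by the homogeneity theorem the restricted coefficient ideal $\cI[2]$ on $V(x_1)$ is well-defined independent of this local choice; by induction $\inv_q(\cI[2])$ is upper-semi-continuous in $q \in V(x_1)$. Since the lexicographic order on $\Gamma_n$ respects the leading entry $a_1$, combining across strata of different $b_1$ yields upper-semi-continuity of the whole invariant on $Y$.

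The main obstacle is precisely the interplay between étale-local existence of maximal contact and the Zariski-global nature of upper-semi-continuity and functoriality. The homogeneity Theorem \ref{Th:homogeneous} is the decisive tool that resolves this: it lets one identify the coefficient ideals across different étale charts and different choices of hypersurface, so that all constructions descend to well-defined objects on $Y$. Once that theorem is in hand, the three assertions reduce to routine inductive bookkeeping; full details are carried out in \cite{ATW-principalization}.
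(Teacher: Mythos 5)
Your proposal follows the same approach as the paper's proof: induction on dimension via the recursive formula $\inv_p(\cI) = (a_1, \inv_p(\cI[2])/(a_1-1)!)$, with functoriality resting on the compatibility of $\ord$, $\cD^{\leq i}$, and coefficient-ideal formation with smooth pullback, independence of choices via the homogeneity Theorem~\ref{Th:homogeneous} combined with étale-functoriality, and upper-semi-continuity obtained from the inclusion of the maximal-order locus in $V(x_1)$ together with induction. The only detail you gloss over is the small transversality adjustment that the paper makes explicit before invoking Theorem~\ref{Th:homogeneous}: given two maximal contact elements $x_1, x_1'$, the theorem requires the remaining parameters $x_2,\dots,x_n$ to complete \emph{both} to a regular system, which in general demands replacing $x_i$ by $x_i + t_i x_1$ for generic constants $t_i$. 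This is a minor omission and does not affect the validity of the argument.
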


 \begin{proof}

Since both $\ord_p(\cI)$ and the formation of coefficient ideals are functorial for smooth morphisms, the invariant is functorial for smooth morphisms, once parameters are chosen. We now show that the parameter choices do not change the invariant.

 The integer $a_1 = \ord_p(\cI) = \max\{a:  \cI_p\subseteq\fm_p^a \}$ requires no choices. Given a regular sequence \ChDan{of parameters} $(x_1,\ldots,x_n)$ extending $(x_1,\ldots,x_k)$, and given another maximal contact element $x_1'$, we may choose constants $t_i$, and  replace $x_2,\ldots, x_n$ by $x_2+t_2 x_1,\ldots, x_n+t_n x_1$ so that also $(x_1',x_2,\ldots,x_n)$ is a regular sequence \ChDan{of parameters}.

 Taking \'etale $\phi,\phi':\tilde Y \to Y$ as in Theorem \ref{Th:homogeneous},  we have $\phi^*\cI[2] = {\phi'}^*\cI[2]'$, where $\cI[2]'$ is defined using $x_1'$.
 By induction $a_2,\ldots,a_k$ are independent of choices. Hence $(a_1,\dots,a_k)$ is independent of choices.

Since the closed subscheme $V(\cD^{\leq a-1} \cI)$ is the locus where $\ord_p(\cI)\geq a$, the order is upper-semi-continuous. The subscheme $V(\cD^{\leq a_1-1} \cI)$  is contained in    $V(x_1)$ on which $\inv_p(\cI[2])$ is upper-semi-continuous by induction, hence $\inv_p(\cI)$ is upper-semi-continuous.

\end{proof}

We say that the center $J=(x_1^{a_1},\ldots,x_k^{a_k})$ formed by the invariant $(a_1,\dots,a_k)$ and the chosen parameters $(x_1,\ldots,x_k)$ is \emph{associated to $\cI$ at $p$}. This notion is functorial for smooth morphisms, once parameters are chosen on $Y$. We will show in Theorem \ref{Th:center-admissible}(3) that the center is uniquely determined as a valuative $\QQ$-ideal: \ChDan{it is the unique center of maximal invariant admissible for $\cI$}. For the time being we note the following  consequence of Theorem \ref{Th:homogeneous}:

\begin{corollary}\label{Lem:invariant-transverse} If $x_1'$ is another maximal contact element such that $(x_1',x_2,\ldots,x_n)$ is a regular sequence \ChDan{of parameters}, then  $J'=({x_1'}^{a_1},x_2^{a_2}\ldots,x_k^{a_k})$ is also a center associated to $\cI$ at $p$. 
\end{corollary}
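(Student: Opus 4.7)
The plan is to transport the maximal contact structure from $x_1$ to $x_1'$ using the \'etale comparison furnished by Theorem \ref{Th:homogeneous}. First I would extend $x_1,\dots,x_k$ to a regular system of parameters $x_1,\dots,x_n$ as in the corollary's hypothesis, and apply Theorem \ref{Th:homogeneous} to the pair $(x_1,x_2,\dots,x_n)$ and $(x_1',x_2,\dots,x_n)$. This yields $\tilde Y$ with a point $\tilde p$ and \'etale morphisms $\phi,\phi'\colon\tilde Y\to Y$, both sending $\tilde p$ to $p$, satisfying $\phi^*x_1=\phi'^*x_1'$, $\phi^*x_i=\phi'^*x_i$ for $i\geq 2$, and $\phi^*\cG_j=\phi'^*\cG_j$ for every graded piece of $\cG(\cI,a_1)$.

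Restricting the last identity to the common subscheme $V:=\phi^{-1}(V(x_1))=\phi'^{-1}(V(x_1'))$ yields $\phi^*\cI[2]=\phi'^*\cI[2]'$, exactly the identity already exploited in the proof of Theorem \ref{Th:invariant}. Since $x_2|_{V(x_1)}$ is a maximal contact element for $\cI[2]$ at $p$, and since being such is preserved under smooth pullback ($\cD^{\leq a-1}$ commutes with smooth base change and regular parameters remain regular), the restriction of $\phi^*x_2$ to $V$ is a maximal contact element for $\phi^*\cI[2]$ at $\tilde p$. By the matching of pullbacks, this element equals the restriction of $\phi'^*x_2$ to $V$, hence is also a maximal contact element for $\phi'^*\cI[2]'$. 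Descent along the \'etale map $\phi'$ then implies that $x_2|_{V(x_1')}$ itself is a maximal contact element for $\cI[2]'$ at $p$.

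I would then iterate. Theorem \ref{Th:invariant} guarantees that the integers $b_i$ and hence the exponents $a_i$ produced by the two runs of the algorithm coincide. Applying the above transport step inductively to $\cI[i+1]$ on $V(x_1,\dots,x_i)$ versus $\cI[i+1]'$ on $V(x_1',x_2,\dots,x_i)$, one sees at every stage that the restriction of $x_{i+1}$ to $V(x_1',x_2,\dots,x_i)$ is a valid maximal contact element for $\cI[i+1]'$. Consequently the algorithm initialized with $x_1'$ and continued with the given $x_2,\dots,x_k$ produces exactly $J'=({x_1'}^{a_1},x_2^{a_2},\ldots,x_k^{a_k})$, so $J'$ is a center associated to $\cI$ at $p$.

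The main obstacle is the bookkeeping around \'etale descent at each inductive step: one needs that both membership in $\cD^{\leq a-1}\cJ$ and being a regular parameter descend from $\tilde Y$ back to $Y$ along $\phi'$ in the appropriate neighborhoods of $\tilde p$ and $p$. This is routine since $\phi'$ is \'etale near $\tilde p$, but it is the pivotal step that converts the symmetric, \'etale-local matching of Theorem \ref{Th:homogeneous} into the asymmetric conclusion that $x_2,\dots,x_k$ can be reused unchanged as successive maximal contact elements on $Y$ once $x_1$ is replaced by $x_1'$.
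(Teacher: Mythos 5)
Your proof is correct and follows essentially the same route as the paper's (one-line) argument: invoke Theorem \ref{Th:homogeneous} to obtain the \'etale comparison maps $\phi,\phi'$, use the resulting identity $\phi^*\cI[2]={\phi'}^*\cI[2]'$ together with $\phi^*x_i={\phi'}^*x_i$ for $i\ge 2$, and propagate the match down the chain of coefficient ideals by functoriality and \'etale descent. The paper compresses this induction into a single sentence, but it is exactly the iteration you spell out.
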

This  again follows since  $\phi^*\cI[2] = {\phi'}^*\cI[2]'$, where $\cI[2]'$ is defined using $x_1'$.

\subsection{Admissibility of centers}\label{Sec:admissibility} As in earlier work on resolution of singularities, \emph{admissibility} allows flexibility in studying the behavior of ideals under blowings up of centers. This becomes important when an ideal is related to the sum of ideals with different invariants of their own, but all admitting a common admissible center.

In this section we assume that  $a_1$ is a positive integer and $a_i\leq a_{i+1}$. We deliberately do not assume $(a_1,\ldots,a_k)$ is $\inv_p(\cI)$ --- see Remark \ref{Rem:flexible}.

\subsubsection{Admissibility and blowing up}\label{Sec:admissibility-blowup} As in Section \ref{Sec:centers-admissibility} we say that a center $J=(x_1^{a_1},\ldots,x_k^{a_k})$ is $\cI$-admissible at $p$ if the inequality $(x_1^{a_1},\ldots,x_k^{a_k})\leq v(\cI)$ of valuative $\QQ$-ideals is satisfied on a neighborhood of $p$.

Very much in analogy to the notion used in earlier resolution algorithms, this can be described in terms of the weighted blowing up $Y' \to Y$ of the reduced center   $\bar J:= (x_1^{1/w_1},\ldots,x_k^{1/w_k})$, with $w_i$ integers with $\gcd(w_1,\ldots,w_k) = 1$ as follows: let $E = \bar J \cO_{Y'}$, which is an invertible ideal sheaf. Note that since $a_1 w_1$ is an integer also $J\cO_{Y'} = E^{a_1w_1}$ is an invertible ideal sheaf. Therefore $J=(x_1^{a_1},\ldots,x_k^{a_k})$ is $\cI$-admissible if and only if $E^{a_1w_1}$ is $\cI \cO_{Y'}$ admissible, if and only if $\cI \cO_{Y'} = E^{a_1w_1} \cI'$, with $\cI'$ an ideal.

\ChDan{
When $J$ is the center associated to $\cI$, which is shown to be admissible below, the ideal $\cI'$ is called the \emph{weak transform of $\cI$}.}

In terms of its monomial valuation, $J$ is admissible for $\cI$ if and only if $v_J(f) \ge 1$ for all $f \in \cI$. This means that if $f = \sum c_{\bar\alpha} x_1^{\alpha_1} \cdots x_n^{\alpha_n}$ then $\sum_{i=1}^k \alpha_i/a_i\geq 1$ whenever $ c_{\bar\alpha}\neq 0$. \ChDan{This is convenient for testing admissibility, as long as one remembers that $v_{J^m} = v_J / m$.}

If $Y_1 \to Y$ is smooth and  $J$ is  $\cI$-admissible then $J \cO_{Y_1}$ is $\cI\cO_{Y_1}$-admissible, with the converse holding when $Y_1 \to Y$ is surjective.

\subsubsection{Working with rescaled centers}
For induction to work in the arguments below, it is worthwhile to consider blowings up of  centers of the  form $$\bar J^{1/c}:= (x_1^{1/(w_1c)},\ldots,x_k^{1/(w_kc)})$$ for a positive integer $c$.
We also use the notation $J^\alpha:= (x_1^{a_1\alpha},\ldots,x_k^{a_k\alpha})$ throughout --- this being an  equality  of valuative $\QQ$-ideals.

\subsubsection{Basic properties}

The description in Section \ref{Sec:admissibility-blowup} of the monomial valuation of $J$ immediately provides the following lemmas:

\begin{lemma}\label{Lem:admissible-operations}  If $J$ is both $\cI_1$-admissible and $\cI_2$-admissible then $J$ is $\cI_1+\cI_2$-admissible. If $J$ is $\cI$-admissible then $J^k$ is $\cI^k$-admissible. More generally if $J^{c_j}$ is $\cI_j$-admissible then $J^{\sum c_j}$ is $\prod \cI_j$-admissible.\end{lemma}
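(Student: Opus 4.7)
The plan is to reduce everything to the basic valuation-theoretic reformulation of admissibility recorded just above the lemma: the center $J = (x_1^{a_1},\ldots,x_k^{a_k})$ is $\cI$-admissible at $p$ if and only if the monomial valuation $v_J$ satisfies $v_J(f)\geq 1$ for every $f\in \cI$ (equivalently, for every generator of $\cI$). Combined with the rescaling identity $v_{J^m} = v_J/m$, the three assertions become statements about a valuation acting on sums and products of elements, and they follow from the two standard properties of any valuation, namely $v(f+g)\geq \min(v(f),v(g))$ and $v(fg)=v(f)+v(g)$.

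First I would prove the sum statement. Any section of $\cI_1+\cI_2$ is locally $f_1+f_2$ with $f_i \in \cI_i$, and by hypothesis $v_J(f_i)\geq 1$, so $v_J(f_1+f_2)\geq 1$; thus $J$ is $(\cI_1+\cI_2)$-admissible. Next I would prove the third (most general) statement, which contains the power statement as a special case. Given the hypothesis that $J^{c_j}$ is $\cI_j$-admissible, the rescaling identity translates this into $v_J(f_j)\geq c_j$ for all $f_j\in \cI_j$. A generator of $\prod_j \cI_j$ is a sum of products $\prod_j f_j$ with $f_j\in \cI_j$; for each such product,
\[
v_J\bigl(\textstyle\prod_j f_j\bigr)\ =\ \sum_j v_J(f_j)\ \geq\ \sum_j c_j,
\]
so by the ultrametric inequality the whole sum also has $v_J$-value at least $\sum_j c_j$. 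Using $v_{J^{\sum c_j}} = v_J/\sum c_j$ once more, this says precisely that $J^{\sum c_j}$ is $\prod_j \cI_j$-admissible. Taking $c_j=1$ and $\cI_j=\cI$ recovers the second assertion.

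There is no real obstacle here; the only thing to be careful about is that admissibility is a condition on a neighborhood of $p$, so I would either check the valuation inequality at each valuation $v$ dominating a point near $p$ (which is how the monomial-valuation criterion is set up in Section~\ref{Sec:admissibility-blowup}), or simply check it on a set of local generators of the relevant ideals, which suffices because $v_J$ is a valuation. Either way the argument is immediate once admissibility has been recast as $v_J(\cI)\geq 1$.
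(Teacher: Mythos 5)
Your proof is correct and is essentially the same as the paper's, which likewise reduces all three statements to the monomial-valuation criterion ($v_J(f)\geq 1$) and the basic valuation inequalities for sums and products; you have simply spelled out the paper's one-line remark in more detail.
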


Indeed if $v_J(f)\geq 1$ and $v_J(g)\geq 1$ then $v_J(f+g)\geq 1$ and $v_J(f^{c_1}\cdot g^{c_2})\geq c_1+c_2$, etc.

\begin{lemma}\label{Lem:admissible-derivative} If $J$ is $\cI$-admissible then  $J'=J^{\frac{a_1-1}{a_1}}$ is $\cD(\cI)$-admissible. If $a_1>1$ and $J^{\frac{a_1-1}{a_1}}$ is $\cI$-admissible then $J$ is $x_1\cI$-admissible.
\end{lemma}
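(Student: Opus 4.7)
The plan is to work throughout with the monomial valuation $v_J$ attached to $J$, for which $v_J(x_i) = 1/a_i$ for $i \leq k$ and $v_J(x_j) = 0$ for $j > k$. As recalled in Section \ref{Sec:admissibility-blowup}, admissibility of a rescaled center translates directly to a valuation inequality: $J^c$ is $\cI$-admissible if and only if $v_J(f) \geq c$ for every $f \in \cI$. This is the only tool we need.

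For the first assertion, assume $v_J(f) \geq 1$ for all $f \in \cI$. I must show $v_J(g) \geq (a_1-1)/a_1$ for every $g \in \cD^{\leq 1}\cI$. It suffices to treat $g = f$ and $g = \partial f/\partial x_j$ for $f \in \cI$ and $j = 1,\ldots,n$. The case $g = f$ is immediate since $1 \geq (a_1-1)/a_1$. For a derivative, I expand $f = \sum c_\alpha x^\alpha$ in a regular system of parameters extending $(x_1,\ldots,x_k)$; by hypothesis every monomial with $c_\alpha \neq 0$ has $\sum_{i \leq k} \alpha_i/a_i \geq 1$. Differentiating with respect to $x_j$ with $j > k$ preserves the valuation of each monomial, so $v_J(\partial f/\partial x_j) \geq 1$. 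Differentiating with respect to $x_j$ with $j \leq k$ lowers the valuation of each monomial by at most $1/a_j \leq 1/a_1$, using the hypothesis $a_1 \leq a_i$; thus $v_J(\partial f/\partial x_j) \geq 1 - 1/a_1 = (a_1-1)/a_1$, as required.

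For the converse, assume $a_1 > 1$ and $v_J(f) \geq (a_1-1)/a_1$ for every $f \in \cI$. Since $v_J(x_1) = 1/a_1$, I get
\[
v_J(x_1 f) \ = \ \frac{1}{a_1} + v_J(f) \ \geq\ \frac{1}{a_1} + \frac{a_1-1}{a_1} \ = \ 1,
\]
so $J$ is admissible for the ideal generated by the products $x_1 f$, that is, for $x_1 \cI$.

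I do not anticipate a genuine obstacle: the whole statement reduces to the elementary fact that applying $\partial/\partial x_j$ shifts the monomial valuation by $-1/a_j$, while $a_1$ being the minimum of the $a_i$ controls the worst case. The only subtle point to remember is the convention $v_{J^\alpha} = v_J/\alpha$ implicit in the definition of the rescaled center.
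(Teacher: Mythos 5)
Your proof is correct and follows essentially the same route as the paper: expand a generator as a series in the chosen parameters, observe that $\partial/\partial x_j$ shifts the $v_J$-valuation of each monomial by $-1/a_j \geq -1/a_1$ (with no shift when $j>k$), and use the convention $v_{J^c} = v_J/c$ to translate back into admissibility. The paper's proof of the first statement is a one-line version of your monomial computation and leaves the second statement as "similar," which you spell out; the hypothesis $a_1>1$ is only needed so that $J^{(a_1-1)/a_1}$ is a genuine center with positive exponents, as you implicitly note.
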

\begin{proof}
For the first statement note that if $\sum_{i=1}^k \alpha_i/a_i \geq 1$ and $\alpha_j\geq 1$ then $$v_J\left(\frac{\partial(x_1^{\alpha_1} \cdots x_n^{\alpha_n})}{\partial x_j}\right) \quad = \quad \sum_{i=1}^k \alpha_i/a_i\  -\  1/a_j \quad \geq \quad 1\  -\  1/a_1,$$ so $$v_{J'}\left(\frac{\partial(x_1^{\alpha_1} \cdots x_n^{\alpha_n})}{\partial x_j}\right) \quad \geq 1, $$ as needed. The other statement is similar.
\end{proof}

\begin{lemma}\label{Lem:admissible-lift} For \ChJarek{$\cI_0\subset k\llbracket x_2,\ldots,x_n\rrbracket$ write $\tilde\cI_0 = \cI_0k\llbracket x_1,\ldots, x_n\rrbracket$.} Assume 
$(x_2^{a_2},\ldots, x_k^{a_k})$ is $\cI_0$-admissible. Then $(x_1^{a_1},\ldots, x_k^{a_k})$ is $\tilde\cI_0$-admissible.
\end{lemma}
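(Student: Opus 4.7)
The plan is to interpret admissibility via monomial valuations, as described in Section \ref{Sec:admissibility-blowup}, and exploit that the monomial valuation $v_J$ associated to $J = (x_1^{a_1},\ldots,x_k^{a_k})$ restricts compatibly to the subring $k\llbracket x_2,\ldots,x_n\rrbracket$.

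First, recall that $v_J$ is the monomial valuation on $k\llbracket x_1,\ldots, x_n\rrbracket$ with $v_J(x_i) = 1/a_i$ for $1 \leq i \leq k$ and $v_J(x_j) = 0$ for $j>k$, extended to power series by taking the minimum over monomials with non-zero coefficient. By the criterion in Section \ref{Sec:admissibility-blowup}, admissibility of $(x_1^{a_1},\ldots,x_k^{a_k})$ for $\tilde\cI_0$ amounts to $v_J(h) \geq 1$ for every $h \in \tilde\cI_0$. Write $v_{J_0}$ for the analogous monomial valuation on $k\llbracket x_2,\ldots, x_n\rrbracket$ associated to $(x_2^{a_2},\ldots,x_k^{a_k})$. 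Since no $x_1$ appears in elements of $k\llbracket x_2,\ldots, x_n\rrbracket$, one has $v_J(f) = v_{J_0}(f)$ for every $f \in k\llbracket x_2,\ldots,x_n\rrbracket$; the hypothesis gives $v_J(f) = v_{J_0}(f) \geq 1$ for all $f \in \cI_0$.

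Second, by definition $\tilde\cI_0$ is the ideal generated by $\cI_0$ in $k\llbracket x_1,\ldots,x_n\rrbracket$, so any $h \in \tilde\cI_0$ is a finite sum $h = \sum_j g_j f_j$ with $f_j \in \cI_0$ and $g_j \in k\llbracket x_1,\ldots,x_n\rrbracket$. Since $v_J$ is a valuation, $v_J(g_j f_j) = v_J(g_j) + v_J(f_j) \geq 0 + 1 = 1$, and $v_J\bigl(\sum_j g_j f_j\bigr) \geq \min_j v_J(g_j f_j) \geq 1$. This gives $v_J(h) \geq 1$, which is the desired admissibility.

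There is essentially no obstacle here: the content is the observation that $v_J$ restricted to the subring $k\llbracket x_2,\ldots,x_n\rrbracket$ coincides with $v_{J_0}$, combined with the fact that adjoining a parameter $x_1$ with non-negative weight $1/a_1$ can only increase the valuation of extended elements. The result could also be phrased as the pullback of the $\cI_0$-admissibility of $J_0$ under the smooth morphism $\Spf k\llbracket x_1,\ldots,x_n\rrbracket \to \Spf k\llbracket x_2,\ldots,x_n\rrbracket$, together with the trivial fact that the center $(x_1^{a_1},\ldots,x_k^{a_k})$ dominates the pullback of $(x_2^{a_2},\ldots,x_k^{a_k})$.
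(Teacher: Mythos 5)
Your proof is correct and takes essentially the same route as the paper's. The paper's proof is a single sentence — "Here for generators of $\cI_0$ we have $\sum_{i=1}^k \alpha_i/a_i = \sum_{i=2}^k \alpha_i/a_i$" — which is exactly your observation that $v_J$ restricted to $k\llbracket x_2,\ldots,x_n\rrbracket$ coincides with $v_{J_0}$ (since $\alpha_1 = 0$ on all monomials there), and tacitly uses that admissibility is tested on generators via the valuation property, which your second paragraph makes explicit.
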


Here for generators of  $\cI_0$ we have $\sum_{i=1}^k \alpha_i/a_i = \sum_{i=2}^k \alpha_i/a_i$.

\begin{lemma}\label{Lem:admissible-coefficient} $J$ is $\cI$-admissible if and only if  $J^{(a_1-1)!}$ is $C(\cI,a_1)$-admissible.
\end{lemma}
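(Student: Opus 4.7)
The plan is to work entirely through the monomial valuation $v_J$ using the characterization from Section~\ref{Sec:admissibility-blowup}: for any rational $\alpha>0$, the rescaled center $J^\alpha$ is $\cJ$-admissible if and only if $v_J(f)\geq \alpha$ for all $f\in \cJ$. With this in hand, the two directions split naturally into a ``build up'' using derivatives and a ``come down'' using powers.

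For the implication $(\Rightarrow)$, I would first iterate Lemma~\ref{Lem:admissible-derivative} to obtain that
$$J^{(a_1-i)/a_1}\text{ is }\cD^{\leq i}\cI\text{-admissible for all }i=0,1,\ldots,a_1-1.$$
The induction step is the lemma applied to the current center $J^{(a_1-i)/a_1}$, whose leading exponent is $a_1-i$, so that $(J^{(a_1-i)/a_1})^{(a_1-i-1)/(a_1-i)} = J^{(a_1-i-1)/a_1}$ is admissible for $\cD(\cD^{\leq i}\cI)\subseteq \cD^{\leq i+1}\cI$. I would then apply Lemma~\ref{Lem:admissible-operations} to a monomial generator $g=\prod_i f_i^{b_i}$ of $\cG_{a_1!}$ with $f_i\in\cD^{\leq i}\cI$ and $\sum_{i=0}^{a_1-1}(a_1-i)b_i\geq a_1!$: this yields that $J^{\sum_i b_i(a_1-i)/a_1}$ is $(g)$-admissible, and since $\sum_i b_i(a_1-i)/a_1\geq a_1!/a_1=(a_1-1)!$, the (weaker) $J^{(a_1-1)!}$-admissibility follows. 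Summing over generators gives $C(\cI,a_1)$-admissibility of $J^{(a_1-1)!}$.

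For the implication $(\Leftarrow)$, the observation is that $\cI$ sits in degree $a_1$ of $\cG$, so $\cI^{(a_1-1)!}\subseteq \cG_{a_1\cdot(a_1-1)!}=\cG_{a_1!}=C(\cI,a_1)$. Hence for any $f\in\cI$ we have $f^{(a_1-1)!}\in C(\cI,a_1)$, and the hypothesis gives $v_J(f^{(a_1-1)!})\geq (a_1-1)!$, that is, $(a_1-1)!\cdot v_J(f)\geq (a_1-1)!$, so $v_J(f)\geq 1$. This is exactly $\cI$-admissibility of $J$.

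The only subtlety to watch for is keeping the bookkeeping of exponents straight---remembering that $v_{J^\alpha}=v_J/\alpha$, so higher $\alpha$ is a \emph{stronger} admissibility condition, while higher valuation is a \emph{stronger} conclusion; and that in the iteration of Lemma~\ref{Lem:admissible-derivative} the role of ``$a_1$'' must be updated to the current leading exponent at each step. Neither of these is a genuine obstacle; the whole argument is a short valuation-theoretic computation, and no auxiliary ingredients beyond Lemmas~\ref{Lem:admissible-operations} and~\ref{Lem:admissible-derivative} are needed.
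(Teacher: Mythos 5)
Your proof is correct and takes essentially the same approach as the paper, whose own proof just says the result ``combines Lemmas \ref{Lem:admissible-operations} and \ref{Lem:admissible-derivative} for the terms defining $C(\cI,a_1)$''; you have filled in exactly how the iteration of the derivative lemma and the product/sum lemma yield the valuation bound on each monomial generator of $\cG_{a_1!}$. The paper also separately records the degenerate case $\ord(\cI)<a_1$ (where $C(\cI,a_1)=(1)$ and both sides of the equivalence are false), but your argument subsumes it implicitly, since $\cI$-admissibility of $J$ forces $\ord(\cI)\geq a_1$ in the forward direction and $(1)$-admissibility of a nonzero center never holds in the reverse.
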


\ChDan{
\begin{proof} When $\cI$ has order $<a_1$ then $J$ is not admissible for $\cI$ and $J^{(a_1-1)!}$ is not admissible for $\cC(\cI,a_1) = (1)$. When $\cI$ has order $\geq a_1$ this combines Lemmas \ref{Lem:admissible-operations} and \ref {Lem:admissible-derivative} for the terms defining    $C(\cI,a_1)$.
\end{proof}

This statement is only relevant, and will only be used, when $\cI$ has order $a_1$. If $a_1<a:=\ord(\cI)$ then $J^{(a_1-1)!}$ is in general not $C(\cI,a)$-admissible. For instance $J = (x_1)$ is admissible for $\cI=(x_1x_2)$ but not for $C(\cI,2) = (x_1^2,x_1x_2,x_2^2)$.}

\ChDan{

\begin{lemma}\label{Lem:admissible-transverse} Assume $(x_1,x_2,\ldots,x_n)$ and $(x_1',x_2,\ldots,x_n)$ are both regular sequences of parameters, and suppose $(x_1^{a_1},x_2^{a_2},\ldots,x_k^{a_k}) \leq v(x_1^{a_1})$. Then $(x_1^{a_1},x_2^{a_2}\ldots,x_k^{a_k}) = ({x_1'}^{a_1},x_2^{a_2}\ldots,x_k^{a_k})$ as centers.
\end{lemma}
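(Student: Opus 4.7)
The plan is to verify the equality $J = J'$ of the two valuative $\QQ$-ideals $J = (x_1^{a_1}, \ldots, x_k^{a_k})$ and $J' = ({x_1'}^{a_1}, x_2^{a_2}, \ldots, x_k^{a_k})$ by checking it pointwise on the Zariski--Riemann space $\ZR(Y)$. At each valuation $w$ the two values take the form of minima
$$J_w = \min\{a_1 w(x_1), a_2 w(x_2), \ldots, a_k w(x_k)\}, \quad J'_w = \min\{a_1 w(x_1'), a_2 w(x_2), \ldots, a_k w(x_k)\},$$
which share every entry except the first. Thus everything reduces to showing that replacing $a_1 w(x_1)$ with $a_1 w(x_1')$ does not alter the minimum.

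My first step would be to unpack the hypothesis to extract the bound $a_1 w(x_1') \geq J_w$ valid at every $w \in \ZR(Y)$. Since the remaining entries $a_i w(x_i)$ for $i \geq 2$ satisfy $a_i w(x_i) \geq J_w$ trivially (they appear in the defining minimum), this renders every entry in the min defining $J'_w$ bounded below by $J_w$, yielding $J'_w \geq J_w$. By the symmetric structure of the data --- the regular systems $(x_1, x_2, \ldots, x_n)$ and $(x_1', x_2, \ldots, x_n)$ sit on equal footing, so the hypothesis is applicable with the roles of $x_1$ and $x_1'$ interchanged --- the mirror bound $a_1 w(x_1) \geq J'_w$ gives $J_w \geq J'_w$. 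Combining the two inequalities yields $J_w = J'_w$ at every $w$, hence $J = J'$ as centers.

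The delicate point, and where the main work sits, is verifying that the bound on $w(x_1')$ holds uniformly over every valuation in $\ZR(Y)$, not merely at the distinguished monomial valuation $v_J$ (where admissibility has its most intuitive interpretation). This is precisely the strength of the valuative $\QQ$-ideal formalism of Section \ref{Sec:valuative-Q-ideal}: the hypothesis is a global section of $\Gamma_{\QQ+}$ on $\ZR(Y)$, so the pointwise inequality is guaranteed at every valuation at once. Once this uniform control is in hand, the remainder of the proof is routine manipulation of minima, in the same spirit as Lemmas \ref{Lem:admissible-operations} and \ref{Lem:admissible-derivative}.
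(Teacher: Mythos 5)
Your derivation of the one-sided inequality $J'_w \geq J_w$ from the hypothesis is correct (reading the hypothesis, as the paper's own proof makes clear, as $(x_1^{a_1},\ldots,x_k^{a_k}) \leq v({x_1'}^{a_1})$). But the ``mirror bound'' step is a genuine gap. The hypothesis is \emph{not} symmetric in $x_1$ and $x_1'$: it asserts $v_J \leq a_1 w(x_1')$ pointwise, whereas the swapped version you invoke is $v_{J'} \leq a_1 w(x_1)$, i.e.\ $({x_1'}^{a_1},x_2^{a_2},\ldots,x_k^{a_k}) \leq v(x_1^{a_1})$. This is a different statement and is not part of the data. The only symmetry in the setup is that $(x_1,\ldots,x_n)$ and $(x_1',\ldots,x_n)$ are both regular parameter systems, which by itself does not force $J = J'$: with $a_1=1$, $a_2=2$ and $x_1' = x_1+x_2$, the centers $(x_1,x_2^2)$ and $(x_1',x_2^2)$ differ, precisely because the $\leq v({x_1'}^{a_1})$ hypothesis fails. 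In fact the swapped bound you want is \emph{equivalent}, given the inequality $J' \geq J$ you already proved, to the conclusion $J=J'$ itself --- so the argument as written is circular.

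What the paper does instead is pass to ideals. Rescaling so the $a_i$ are integers, the hypothesis says ${x_1'}^{a_1}$ lies in the integral closure $(x_1^{a_1},\ldots,x_k^{a_k})^{\mathrm{int}}$, which gives the single containment $({x_1'}^{a_1},x_2^{a_2},\ldots,x_k^{a_k})^{\mathrm{int}} \subset (x_1^{a_1},\ldots,x_k^{a_k})^{\mathrm{int}}$ --- exactly your inequality $J' \geq J$. The reverse direction is then supplied not by symmetry but by a Hilbert--Samuel function count: both ideals are, in the completion, monomial ideals with identical exponents in regular parameter systems, hence have the same Hilbert--Samuel function, and so do their integral closures; a containment of ideals with equal Hilbert--Samuel functions is an equality. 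Your proof needs a substitute for this step; the valuation-by-valuation framing is a fine way to organize the easy half, but the hard half requires the numerical input.
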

\begin{proof} We may rescale $a_i$ and assume they are all integers. The inequality $(x_1^{a_1},x_2^{a_2}\ldots,x_k^{a_k}) \leq v({x_1'}^{a_1})$ implies that ${x_1'}^{a_1}$ lies in the integral closure $(x_1^{a_1},x_2^{a_2}\ldots,x_k^{a_k})^{int}$, hence $$({x_1'}^{a_1},x_2^{a_2}\ldots,x_k^{a_k})^{int}\subset (x_1^{a_1},x_2^{a_2}\ldots,x_k^{a_k})^{int}.$$ Since these two ideals have the same Hilbert--Samuel functions they coincide.
\end{proof}
}

\ChDan{
\subsection{Our chosen center is uniquely admissible}
\begin{theorem}\label{Th:center-admissible} 
\begin{enumerate} 
\item If $(a_1,\ldots ,a_k) = \inv_p(\cI)$, with corresponding parameters $x_1,\ldots,x_k$, and $J = (x_1^{a_1},\ldots,x_k^{a_k})$ a corresponding center, then $J$ is $\cI$-admissible.
\item  $$\inv_p(\cI) = \max_{ ({x_1'}^{b_1},\ldots,{x_k'}^{b_k}) \leq v(\cI)} (b_1,\ldots,b_k),$$
in other words it is the maximal invariant of a center admissible for $\cI$.
\item $J$ is the unique admissible center with invariant $\inv_p(\cI)$.
\end{enumerate}
\end{theorem}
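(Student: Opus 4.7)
I prove all three parts by joint induction on $k=|\inv_p(\cI)|$, working in the formal completion at $p$.

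For part (1), since $\cI\subset\cG_{a_1}$ (the degree $a_1$ piece of the graded algebra $\cG$), it suffices to show $v_J(\cG_{a_1})\geq 1$. Proposition \ref{Lem:coefficient-description} gives $\cG_{a_1}=\sum_{j=0}^{a_1}(x_1^{a_1-j})\tilde\cC_j$, and by Lemma \ref{Lem:admissible-lift} the required bound reduces to $v_{(x_2^{a_2},\dots,x_k^{a_k})}(\bar\cC_j)\geq j/a_1$ for $0\leq j\leq a_1$. The inductive hypothesis on $\cI[2]=\bar\cC_{a_1!}$, which has invariant $(a_1-1)!(a_2,\dots,a_k)$, supplies the admissible center $(x_2^{a_2},\dots,x_k^{a_k})^{(a_1-1)!}$, yielding $v(\bar\cC_{a_1!})\geq(a_1-1)!$. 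For $1\leq j\leq a_1$ the integer $a_1!/j$ is positive, and graded multiplicativity gives $\bar\cC_j^{a_1!/j}\subset\bar\cC_{a_1!}$, hence $v(\bar\cC_j)\geq j(a_1-1)!/a_1!=j/a_1$. The base case $k=1$, where $\cI[2]=0$, is handled by the same multiplicativity: each $\bar\cC_j$ with $j\geq 1$ is nilpotent in the domain $k\llbracket x_2,\dots,x_n\rrbracket$ and hence zero, so $\cG_{a_1}=(x_1^{a_1})$ formally.

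For parts (2) and (3), let $J'=(y_1^{b_1},\dots,y_\ell^{b_\ell})$ be any admissible center with $b_i\leq b_{i+1}$. Expanding an $f\in\cI$ of order $a_1$ in the $y_i$-monomials gives $b_1\leq a_1$; strict inequality yields $\inv(J')<\inv_p(\cI)$ lexicographically, so assume $b_1=a_1$. A direct iteration of the derivative step of Lemma \ref{Lem:admissible-derivative} yields $v_{J'}(\cD^{\leq a_1-1}\cI)\geq 1/a_1$, hence $v_{J'}(x_1^{a_1})\geq 1$ for the chosen maximal contact element $x_1$. After translating $y_2,\dots,y_n$ by multiples of $y_1$ so that $(x_1,y_2,\dots,y_n)$ is a regular system (as in the proof of Theorem \ref{Th:invariant}), Lemma \ref{Lem:admissible-transverse} identifies $J'=(x_1^{a_1},y_2^{b_2},\dots,y_\ell^{b_\ell})$ as valuative $\QQ$-ideals. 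Combining Lemma \ref{Lem:admissible-coefficient} with the $j=a_1!$ term of the formal decomposition of $C(\cI,a_1)$ and Lemma \ref{Lem:admissible-lift} then shows that $(y_2^{b_2},\dots,y_\ell^{b_\ell})^{(a_1-1)!}|_{V(x_1)}$ is $\cI[2]$-admissible. The inductive hypothesis (2) on $\cI[2]$ bounds its invariant by $\inv_p(\cI[2])=(a_1-1)!(a_2,\dots,a_k)$; dividing by $(a_1-1)!$ gives $(b_2,\dots,b_\ell)\leq(a_2,\dots,a_k)$ and thus $\inv(J')\leq\inv_p(\cI)$, with the truncation cases $\ell\neq k$ absorbed into the well-ordering. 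If equality holds, the inductive hypothesis (3) on $\cI[2]$ identifies the restricted centers on $V(x_1)$; since $a_1\leq a_i$ for $i\leq k$, any perturbation $y_i\mapsto y_i+x_1\cdot(\cdot)$ has $v$-valuation at least $1/a_i$ and so leaves the monomial valuation unchanged, giving $J'=J$ on $Y$.

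The main obstacle is the coordinate-exchange argument in part (2): combining the bound $v_{J'}(x_1^{a_1})\geq 1$ with Lemma \ref{Lem:admissible-transverse} to replace $y_1$ by the maximal contact $x_1$, and then extracting from the formal decomposition of $C(\cI,a_1)$ the correct admissible center for $\cI[2]$. For part (3) the uniqueness hinges on the sorting $a_1\leq\cdots\leq a_k$, which ensures perturbations by $x_1$-dependent terms are invisible to the monomial valuation, reconciling distinct parameter systems as the same valuative $\QQ$-ideal.
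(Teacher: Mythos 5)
Your overall strategy — induction on the length of the invariant, pass to the formal decomposition of Proposition~\ref{Lem:coefficient-description}, use the inductive hypothesis on $\cI[2]=\bar\cC_{a_1!}$, and then normalize an arbitrary admissible center so that $x_1$ replaces $y_1$ via Lemma~\ref{Lem:admissible-transverse} — matches the paper's proof. The genuine difference is in part~(1): the paper passes to $\cC=C(\cI,a_1)=\cG_{a_1!}$ and reduces via Lemma~\ref{Lem:admissible-coefficient} before invoking the decomposition, while you work directly with $\cG_{a_1}\supset\cI$ and verify $v_J(\cG_{a_1})\geq 1$. To handle the middle terms $(x_1^{a_1-j})\tilde\cC_j$ you appeal to graded multiplicativity $\bar\cC_j^{a_1!/j}\subset\bar\cC_{a_1!}$ (valid since $j\mid a_1!$ for $1\leq j\leq a_1$), which isolates the needed bound $v_{J_H}(\bar\cC_j)\geq j/a_1$ in a way the paper's terse citation of Lemma~\ref{Lem:admissible-derivative} leaves implicit; your version is arguably cleaner and skips the $\cG_{a_1!}$ detour. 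Your base case $k=1$, zeroing out $\bar\cC_j$ for $j\geq 1$ because it is nilpotent in a domain, is also a nice touch.

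Two points in parts~(2)--(3) need tightening. First, before invoking Lemma~\ref{Lem:admissible-transverse} the paper evaluates $J'\leq v(x_1^{a_1})$ at the divisorial valuation of $x_1=0$ to conclude $x_1\in(y_1,\dots,y_\ell)+\fm_p^2$, and only then \emph{reorders} among $y_1,\dots,y_\ell$ so $y_1$ has nonzero coefficient. Translating $y_2,\dots,y_n$ by multiples of $y_1$, which you import from the proof of Theorem~\ref{Th:invariant}, does achieve regularity of $(x_1,y_2,\dots,y_n)$, but it does not by itself put the remaining parameters into $k\llbracket x_2,\dots,x_n\rrbracket$; the paper performs a \emph{second} translation $x_i'\mapsto x_i'+\alpha x_1$ for that purpose. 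Without that second step, the passage from admissibility of $J'^{(a_1-1)!}$ for $\tilde\cC_{a_1!}$ to admissibility of the restricted center for $\bar\cC_{a_1!}=\cI[2]$ is not immediate, since the monomial valuation on $V(x_1)$ need not match the ambient one until the $y_i$'s are $x_1$-free. Second, Lemma~\ref{Lem:admissible-lift} as stated goes in the opposite direction to the one you use (it lifts admissibility from $V(x_1)$ to $Y$, not the reverse); the direction you need is elementary, but it is not that lemma. Both are minor and easily repaired, and after those corrections your argument lines up with the paper's.
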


\begin{proof} 
We first prove (1).  Applying Lemma \ref{Lem:admissible-coefficient}, we
replace $\cI$ by $\cC = C(\cI,a_1)$, rescale the invariant up to $a_1!$ and work on formal completion. We may therefore write
$$\cC = (x_1^{a_1!}) + (x_1^{a_1!-1} \tilde\cC_{1})  + \cdots + (x_1 \tilde\cC_{a_1!-1}) + \tilde\cC_{a_1!}$$
as in Proposition \ref{Lem:coefficient-description}.

The inductive hypothesis implies 
$J^{(a_1-1)!}$ is $\bar\cC_{a_1!}$-admissible. By Lemma \ref{Lem:admissible-lift} $J^{(a_1-1)!}$ is $\tilde\cC_{a_1!}$-admissible.
By Lemma \ref{Lem:admissible-derivative} $J^{(a_1-1)!}$ is  $(x_1^{a_1!-j} \tilde\cC_{j})$-admissible, so by Lemma \ref{Lem:admissible-operations}  $J^{(a_1-1)!}$ is $\cC$-admissible, as needed.

We prove (2) and (3) simultaneously. Let $J = (x_1^{a_1}, \dots,x_k^{a_k})$ be a center associated with the invariant at $p$.

Assume $(b_1,\ldots,b_m)\geq (a_1, \ldots,a_k)$.  If $J' = ({x_1'}^{b_1},\ldots,{x_k'}^{b_k})$ is admissible for $\cI$ then $b_1\leq a_1$.  Since our chosen center $J$ has $b_1=a_1$ this maximum is achieved. Let  $\ell = \max \{i:b_i=a_1\} \geq 1$. Evaluating  $J' < v(\cI) \leq v(x^{a_1})$ at the divisorial valuation of $x_1=0$ we have that $x_1 \in (x_1',\ldots,x_\ell') + \fm_p^2$, and after reordering we get that $(x_1,x_2',\ldots,x_n')$ is a regular system of parameters. By Lemma \ref{Lem:admissible-transverse} we may write $J' = (x_1^{a_1}, {x_2'}^{b_2},\ldots,{x_k'}^{b_k})$. Working on formal completions we may replace $x_i'$ by a suitable $x_i' + \alpha x_1$ so we may assume $x_i' \in k\llbracket x_2,\ldots,x_n\rrbracket$.

By Lemma \ref{Lem:admissible-coefficient} we may replace $\cI$ by $C(\cI, a_1)$ and replace  $\inv_p(\cI)$ by $(a_1-1)! \ (a_1,\ldots,a_k)$. Working again on formal completions we write
$$\cC = (x_1^{a_1!}) + (x_1^{a_1!-1} \tilde\cC_{1})  + \cdots + (x_1 \tilde\cC_{a_1!-1}) + \tilde\cC_{a_1!}.$$
By induction $(a_1-1)! \ (a_2,\ldots,a_k)$ is the maximal invariant for $\bar\cC_{a_1!}$, with unique center $(x_2^{a_2},\ldots,x_k^{a_k})$.  By functoriality, the invariant is maximal for $\tilde\cC_{a_1!}$. But $J' =(x_1^{a_1}, {x_2'}^{b_2},\ldots,{x_k'}^{b_k}) < v(\tilde\cC_{a_1!})$ is equivalent to $( {x_2'}^{b_2},\ldots,{x_k'}^{b_k}) < v(\tilde\cC_{a_1!})$.   It follows that $(a_1-1)!\ (a_1,\ldots,a_k)$ is the maximal invariant of a center admissible for  $C(\cI, a_1)$, with unique center $J$.

\end{proof}

}
    
\begin{remark}\label{Rem:flexible} 
\begin{enumerate} 
\item Stated in terms of the monomial valuation $v_J$ associated to $J$, the theorem says it is the unique monomial valuation with lexicograohically \emph{minimal} weights $(w_1,\ldots,w_n)$ satisfying $v(\cI)= 1$.
\item As an example for the added flexibility provided by admissibility, the center $(x_1^6,x_2^6)$ is $(x_1^3x_2^3)$-admissible because this is the corresponding invariant, but  also $(x_1^5,x_2^{15/2})$ is admissible. This second center becomes important when one considers instead the ideal $(x_1^5+x_1^3x_2^3)$, or even $(x_1^5+x_1^3x_2^3 +x_2^8)$, whose invariant is $(5, 15/2)$, as described in Section \ref{Sec:examples} below.
\end{enumerate}
\end{remark}

\ChDan{
%

\begin{corollary}\label{Cor:invariant-homogeneous}
We have $\inv_p(\cI^k) = k\cdot\inv_p(\cI)$ and $\inv_p(C(\cI, a_1)) = (a_1-1)! \ \inv_p(\cI)$ when $a_1 = \ord_p(\cI)$.
\end{corollary}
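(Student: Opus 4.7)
The plan is to reduce both identities to Theorem \ref{Th:center-admissible}(2), which characterizes $\inv_p(\cI)$ as the maximum of $\inv(J')$ taken over centers $J'$ admissible for $\cI$. Since a center $J'=(x_1^{b_1},\ldots,x_k^{b_k})$ has invariant $(b_1,\ldots,b_k)$ and $J'^{\alpha}$ has invariant $\alpha\cdot(b_1,\ldots,b_k)$, and since multiplication by a positive rational is an order-preserving bijection of $\QQ_{\geq 0}^{\leq n}$ in the lexicographic order, it will suffice in each case to exhibit a positive scalar $\alpha$ and a bijection $J'\leftrightarrow (J')^{\alpha}$ between admissible centers on the two sides.

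For the first identity, the valuative description of admissibility in Section \ref{Sec:admissibility-blowup} gives at once that $J'$ is $\cI^k$-admissible if and only if $v_{J'}\leq v(\cI^k)=k\cdot v(\cI)$, equivalently $v_{(J')^{1/k}}\leq v(\cI)$, i.e.\ $(J')^{1/k}$ is $\cI$-admissible. The map $J'\mapsto (J')^{1/k}$ is a bijection on centers that scales invariants by $1/k$, so taking maxima and rescaling gives $\inv_p(\cI^k)=k\cdot\inv_p(\cI)$ (the edge case $\cI=(0)$ being trivial since both sides are empty).

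For the second identity, the hypothesis $a_1=\ord_p(\cI)$ places us in the regime in which Lemma \ref{Lem:admissible-coefficient} applies in both directions: given any center $J''$, setting $J=(J'')^{1/(a_1-1)!}$ shows that $J''$ is $C(\cI,a_1)$-admissible if and only if $J$ is $\cI$-admissible. The map $J\mapsto J^{(a_1-1)!}$ is thus a bijection between centers admissible for $\cI$ and centers admissible for $C(\cI,a_1)$, and it multiplies invariants by $(a_1-1)!$. Applying Theorem \ref{Th:center-admissible}(2) on each side yields $\inv_p(C(\cI,a_1))=(a_1-1)!\cdot\inv_p(\cI)$.

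The only point requiring care is the symmetric application of Lemma \ref{Lem:admissible-coefficient}: the lemma is stated as a biconditional, but one should verify that the rescaled center $J=(J'')^{1/(a_1-1)!}$ still falls under the conventions of Section \ref{Sec:admissibility} (in particular, that fractional $a_1$ poses no obstruction, admissibility being purely a condition on the associated monomial valuation). This is the only potential obstacle, and it is immediate from the valuative reformulation of admissibility.
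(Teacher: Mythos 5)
Your proof is correct and follows the same route as the paper's one-line argument: translate each identity into a bijection between admissible centers (via $J\leftrightarrow J^k$ for the first and Lemma \ref{Lem:admissible-coefficient} for the second) and invoke Theorem \ref{Th:center-admissible}(2). One refinement on where ``care'' is actually needed in the second identity: it is not the fractional exponent (which is indeed harmless, admissibility being a purely valuative condition), but rather that in Lemma \ref{Lem:admissible-coefficient} the $a_1$ appearing in $C(\cI,a_1)$ is bound to the first exponent of $J$ --- so the claimed ``bijection between centers admissible for $\cI$ and centers admissible for $C(\cI,a_1)$'' is only justified when restricted to centers whose first exponent is $a_1$ (respectively $a_1!$). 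This restriction is harmless for the argument, since $\ord_p(\cI)=a_1$ and $\ord_p(C(\cI,a_1))=a_1!$ force the maxima in Theorem \ref{Th:center-admissible}(2) to be attained at centers with exactly these first exponents, by lexicographic priority of the leading entry.
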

Indeed $J^k$ is admissible for $\cI^k$ if and only if $J$ is admissible for $\cI$, and Lemma \ref{Lem:admissible-coefficient} provides the analogous statement for the coefficient ideal.

}

\section{Principalization}

\subsection{The principalization theorem}
\begin{theorem}[Principalization]\label{Th:principalization} There is a functor $F_{pr}$ associating to a nowhere zero  $ \cI\subsetneq \cO_Y$ an admissible center $J$ with reduced center $\bar J$, with blowing up $Y'\to Y$ and \ChDan{weak} transform $ \cI'\subset \cO_Y'$, such that $\maxinv(\cI') <\maxinv(\cI)$.
In particular there is an integer $n$ so that the iterated application $(\cI_n \subset \cO_{Y_n}):= F_{pr}^{\,\circ n}(\cI\subset \cO_Y)  $ of $F_{pr}$ has $\cI_n = (1)$. Functoriality here is with respect to smooth surjective morphisms.

The stabilized functor $F_{pr}^{\,\circ \infty}(\cI\subset \cO_Y)$ is functorial for all smooth morphisms, whether or not surjective.
\end{theorem}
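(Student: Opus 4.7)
The plan is to define $F_{pr}$ by gluing the local centers provided by Section~\ref{Sec:existence}. At each $p$ in the maximal locus $Z = \{p \in |Y| : \inv_p(\cI) = \maxinv(\cI)\}$, which is closed by upper-semicontinuity (Theorem~\ref{Th:invariant}), take the center $J_p = (x_1^{a_1},\ldots,x_k^{a_k})$ built from the iterated maximal contact parameters with $(a_1,\ldots,a_k) = \maxinv(\cI)$. By Theorem~\ref{Th:center-admissible}(3) these local centers, viewed as valuative $\QQ$-ideals, are uniquely determined by $\cI$, so they glue to a globally defined center $J$ on $Y$ with reduced form $\bar J$. Let $Y' \to Y$ be the weighted blowing up of $\bar J$ (Section~\ref{Sec:weighted-blowup}), and set $F_{pr}(\cI \subset \cO_Y) = (\cI' \subset \cO_{Y'})$ where $\cI'$ is determined by the factorization $\cI \cO_{Y'} = E^m \cdot \cI'$ extracting the exceptional ideal sheaf; admissibility (Theorem~\ref{Th:center-admissible}(1)) ensures this is an honest factorization with $\cI'$ an ideal.

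Functoriality for smooth surjective morphisms $f : Y_1 \to Y$ is immediate from the building blocks: smooth-functoriality of $\inv$ gives $\maxinv(\cI \cO_{Y_1}) = \maxinv(\cI)$, and smooth-functoriality of coefficient ideals together with Theorem~\ref{Th:center-admissible}(3) forces $J \cO_{Y_1}$ to coincide with the center constructed on $Y_1$, so the weighted blowings up agree after base change. Termination of the iteration in finitely many steps is automatic because $\maxinv$ takes values in the well-ordered set $\Gamma_n$. Functoriality of the stabilized functor $F_{pr}^{\,\circ \infty}$ under arbitrary smooth morphisms, even non-surjective ones, follows from the observation that although individual steps may differ when $\maxinv(\cI \cO_{Y_1}) < \maxinv(\cI)$, after sufficient iterations on $Y$ the procedure eventually acts only on the image of $Y_1$, at which point the two sequences synchronize and both terminate at the unit ideal.

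The main technical obstacle is proving that $\maxinv(\cI') < \maxinv(\cI)$. Away from the exceptional divisor $E$ the map $Y' \to Y$ is an isomorphism, and since $Z \subseteq \supp(\bar J)$ lies inside $E$, the invariant of $\cI'$ on the complement of $E$ is already strictly less than $\maxinv(\cI)$. At a point $p' \in E$, I would argue by induction on the length $k$ of $\maxinv(\cI)$ (or equivalently on the dimension $n$). Working on the $x_1$-chart with coordinates $(u, x_2',\ldots, x_n')$ where $x_1 = u^{w_1}$ and $x_j = u^{w_j} x_j'$ for $2 \leq j \leq k$, admissibility forces $\cI \cO_{Y'}$ to be divisible by $u^{\ell}$ (with $\ell$ the common numerator of the $a_i$'s as in Section~\ref{Sec:centers-admissibility}), so the order of $\cI'$ along $E$ is at most $a_1$. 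If the order drops strictly below $a_1$, the first entry of $\inv_{p'}(\cI')$ is already smaller and we are done.

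Otherwise the order equals $a_1$, and $u$ (or a suitable perturbation of it) serves as a maximal contact element for $\cI'$ at $p'$. At this stage the compatibility given by Lemma~\ref{Lem:root} between the weighted blowing up of $\bar J$ and the induced rooted blowing up of the restricted reduced center on the maximal contact hypersurface $\{x_1 = 0\}$, together with the formal decomposition of the coefficient ideal in Proposition~\ref{Lem:coefficient-description} and the homogeneity of the invariant under coefficient-ideal formation recorded in Corollary~\ref{Cor:invariant-homogeneous}, identifies the restricted coefficient ideal of $\cI'$ at $p'$ with the weak transform of the inductive ideal $\cI[2]$ under a compatible weighted blowing up on the hypersurface. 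The inductive hypothesis, applied to $\cI[2]$ of strictly smaller invariant length, then yields a strict decrease in the tail $(a_2, \ldots, a_k)$ of the invariant, and hence $\inv_{p'}(\cI') < \maxinv(\cI)$ as required. The analogous argument on the other $x_i$-charts is symmetric; controlling the interaction of the chart coordinates with the coefficient ideal decomposition is where the technical weight of the proof lies.
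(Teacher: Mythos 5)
Your overall architecture is the same as the paper's: the hard work lives in showing the invariant drops after the weighted blowing up (the paper isolates this as Theorem~\ref{Th:invariant-drops}), uniqueness from Theorem~\ref{Th:center-admissible}(3) gives the global center by gluing, termination is well-ordering of~$\Gamma_n$, and stabilized functoriality is ``remove the empty blowings up.'' But your sketch of the key induction contains two mis-steps.

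First, you claim $u$ ``or a suitable perturbation of it'' serves as a maximal contact element for $\cI'$. It does not: $u$ cuts out the exceptional divisor $E$, whereas the relevant hypersurface is the \emph{proper transform} $\tilde H'$ of $H=\{x_1=0\}$, cut out on the $x_j$-charts ($j\ge 2$) by $x_1'=x_1/u^{w_jc}$. These are genuinely different divisors, and the sentence immediately following yours invokes ``the maximal contact hypersurface $\{x_1=0\}$'' — already inconsistent with $u$. Lemma~\ref{Lem:root} is precisely the statement that $\tilde H'\to H$ is itself a (rescaled) weighted blowing up of the restricted center, which is what lets the induction descend; replacing $\tilde H'$ by $E$ breaks the induction. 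Relatedly, the dichotomy ``if the order drops$\ldots$otherwise order equals $a_1$'' is placed on the $x_1$-chart, but on the $x_1$-chart the coefficient ideal \emph{always} becomes a unit ($x_1^{a_1!}$ is purely exceptional there); the dichotomy is only live on the other charts.

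Second, you assert that the induction ``identifies the restricted coefficient ideal of $\cI'$ at $p'$ with the weak transform of the inductive ideal $\cI[2]$.'' This identification is not automatic: the coefficient-ideal construction does \emph{not} commute with blowing up. The argument in the paper closes this gap by proving the drop for the transformed coefficient ideal $\tilde\cC'$, and then passing to $\cI'$ via the two-sided inclusion $\cI'^{(a_1-1)!}\subset\tilde\cC'\subset C(\cI',a_1)$ together with Corollary~\ref{Cor:invariant-homogeneous}. Your proposal cites the corollary but omits the sandwich, which is the load-bearing step; without it the inductive conclusion for $\tilde\cC'$ does not transfer to $\cI'$.
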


\ChDan{The notion of weak transform is introduced in Section \ref{Sec:admissibility-blowup}.}

\subsection{The invariant drops}
With admissibility of the center we can now analyze  the behavior of the invariant under the corresponding blowing up:

\begin{theorem}\label{Th:invariant-drops} Assume $\cI_p\neq (1)$, and let $(a_1,\ldots ,a_k) = \inv_p(\cI)$, with corresponding parameters $x_1,\ldots,x_k$, and $J = (x_1^{a_1},\ldots,x_k^{a_k})$. For $c\in \NN_{>0}$ write $Y'_c\to Y$ for the blowing up of the rescaled center   $\bar J^{1/c}:= \big(x_1^{1/(w_1c)},\ldots,x_k^{1/(w_kc)}\big)$, with corresponding factorization $\cI\cO_{Y'_c} = E^{a_1w_1c}\cI'$.  Then for every point $p'$ over $p$ we have $\inv_{p'}(\cI') < \inv_p(\cI)$.
\end{theorem}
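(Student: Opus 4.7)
The plan is to induct on the length $k$ of $\inv_p(\cI) = (a_1, \ldots, a_k)$, combining a reduction to the coefficient ideal with a chart-by-chart analysis of the weighted blowing up.

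First I reduce to the coefficient ideal. By Lemma \ref{Lem:admissible-coefficient} and Corollary \ref{Cor:invariant-homogeneous}, the centers $J$ and $J^{(a_1-1)!}$ are simultaneously admissible for $\cI$ and $\cC := C(\cI,a_1)$, and the invariants scale by the factor $(a_1-1)!$. Since the blowing up of $\bar J^{1/c}$ depends only on the reduced center $\bar J$, it suffices to establish $\inv_{p'}(\cC') < \inv_p(\cC)$ at every $p'$ over $p$, where $\cC \cO_{Y'_c} = E^{(a_1-1)!\ell c}\cC'$. By Proposition \ref{Lem:coefficient-description}, after completion at $p$, the ideal $\cC$ contains $x_1^{a_1!}$ as an explicit generator.

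Next I analyze the two types of charts from Section \ref{Sec:weighted-equations}. On the $x_1$-chart, the substitution $x_1 = u^{w_1c}$ sends $x_1^{a_1!}$ to $u^{a_1!w_1c} = u^{(a_1-1)!\ell c}$, which is precisely the exceptional factor to be divided out; hence $\cC' \ni 1$, and so $\cC' = (1)$ throughout this chart, giving $\inv_{p'}(\cC') = () < \inv_p(\cC)$. On the $x_i$-chart with $i \geq 2$, the substitution $x_1 = x_1'u^{w_1c}$ sends $x_1^{a_1!}$ to ${x_1'}^{a_1!}u^{(a_1-1)!\ell c}$, so ${x_1'}^{a_1!} \in \cC'$. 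Consequently $\ord_{p'}(\cC') \leq a_1!$ at every $p' \in E$ in the chart, and $x_1'$ is a candidate maximal contact element. If $\ord_{p'}(\cC') < a_1!$ the first entry of $\inv_{p'}(\cC')$ already drops and we are done; otherwise we use $x_1'$ as maximal contact and pass to $\cC'[2] = C(\cC', a_1!)|_{V(x_1')}$ to control the later entries.

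The inductive step then rests on the identification that $\cC'[2]$ coincides with the weak transform of $\bar\cC_{a_1!} = \cI[2]$ on the strict transform $\tilde H$ of $H := V(x_1)$. By Lemma \ref{Lem:root}, $\tilde H \to H$ is itself the blowing up of $\bar{J'_H}^{1/(cc')}$, where $J'_H = (x_2^{1/w_2}, \ldots, x_k^{1/w_k})$ and $c' = \gcd(w_2,\ldots,w_k)$; this is an instance of our set-up for the pair $(\cI[2] \subset \cO_H)$, whose invariant $\inv_p(\cI[2]) = ((a_1-1)!a_2, \ldots, (a_1-1)!a_k)$ has length $k-1$. The inductive hypothesis yields $\inv_{\bar p'}((\cI[2])') < \inv_p(\cI[2])$ for every $\bar p'$ above $p$ in $\tilde H$, and so $\inv_{p'}(\cC') < \inv_p(\cC)$. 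The base case $k=1$ (where $\cI[2] = 0$) is entirely handled by the $x_1$-chart analysis, which is the only chart present.

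The main obstacle is the precise identification $\cC'[2] = (\cI[2])'$ on $\tilde H$. While morally forced by the compatibility between coefficient ideal formation and admissible blowings up, it requires careful bookkeeping of the exceptional exponents: one must verify, using the formal decomposition of Proposition \ref{Lem:coefficient-description} and the exceptional-divisor arithmetic of Lemma \ref{Lem:root}, that reducing $\cC'$ modulo $x_1'$ and pulling back the induced center on $H$ through the toric chart yields the same ideal that one obtains by first restricting $\cC$ to $H$ and then taking the weak transform under the center $(\bar{J'_H})^{1/(cc')}$.
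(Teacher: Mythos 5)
Your proposal follows the same broad skeleton as the paper — induct on $k$, pass to the coefficient ideal $\cC=C(\cI,a_1)$, analyze the charts of the weighted blowing up, and reduce the tail of the invariant to $\cI[2]=\bar\cC_{a_1!}$ via Lemma \ref{Lem:root} — but it has two genuine gaps at precisely the two places where the paper is most careful.

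First, the reduction \emph{``it suffices to establish $\inv_{p'}(\cC')<\inv_p(\cC)$''} is not justified by Lemma \ref{Lem:admissible-coefficient} and Corollary \ref{Cor:invariant-homogeneous}, which only relate $\inv$ and admissibility of $\cI$ and $\cC$ \emph{at the original point} $p$. At $p'$ the weak transform $\cC'$ of $\cC$ is generally \emph{not} $C(\cI',a_1)$ — indeed $\ord_{p'}(\cI')$ may already have dropped below $a_1$, as the paper's footnote on the $x_1$-chart points out. The paper closes this gap with a separate argument at the end of its proof: the sandwich inclusions $\cI'^{(a_1-1)!}\subset\tilde\cC'\subset C(\cI',a_1)$, combined with the monotonicity of $\inv$ under ideal inclusion (a consequence of Theorem \ref{Th:center-admissible}(2)) and Corollary \ref{Cor:invariant-homogeneous}, force the three invariants to coincide when $\ord_{p'}(\cI')=a_1$, and $\ord_{p'}(\cI')<a_1$ is handled trivially. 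Your proposal does not supply any of this.

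Second, the identity $\cC'[2]=(\cI[2])'$ that you flag as the ``main obstacle'' is \emph{not} what the paper proves, and avoiding it is a structural feature of the paper's argument, not a shortcut. The paper never asserts that the restricted coefficient ideal of $\cC'$ equals the weak transform of $\cI[2]$; instead it uses only the ideal inclusion $\tilde\cC'\supset({x_1'}^{a_1!})+(\tilde\cC_{a_1!})'$, which drops out of the explicit formal decomposition of Proposition \ref{Lem:coefficient-description}, and then invokes monotonicity of $\inv$ to get $\inv_{p'}(\tilde\cC')\leq\bigl(a_1!,\inv_{p'}((\bar\cC_{a_1!})')\bigr)$, finishing with the inductive bound on $(\bar\cC_{a_1!})'$. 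The inequality suffices; your proposed equality is stronger, not known to hold (weak transform and coefficient-ideal formation do not commute in general), and would require substantially more than ``careful bookkeeping.'' As written, the proposal leaves both the front-end reduction and the inductive-step identification unproved, and in both cases the paper's argument takes a deliberately different — and weaker but sufficient — route.
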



\begin{proof}
If $k=0$ the ideal is $(0)$ and there is nothing to prove. When $k=1$ the ideal is $(x_1^{a_1})$, which becomes exceptional with weak transform $\cI'=(1)$. We now assume $k>1$.

Again using  Proposition \ref{Lem:coefficient-description}, we choose formal coordinates, work with $\tilde\cC:= C(\cI,a_1)$,
and write $$\tilde\cC = (x_1^{a_1!}) + (x_1^{a_1!-1} \tilde \cC_{1})  + \cdots + (x_1 \tilde \cC_1) + \tilde \cC_{a_1!}.$$ Writing $\tilde\cC \cO_{Y'_c} = E^{a_1!w_1c} \tilde\cC'$, we will first show that $\inv_{p'} (\tilde\cC') < (a_1-1)!\cdot(a_1,a_2,\ldots,a_k)$ for all points $p'$ over $p$.

Write $H = \{x_1=0\}$, and $H'\to H$ the blowing up of the reduced center $\bar J_H$ associated to $J_H := (x_2^{a_2}, \ldots, x_k^{a_k})$. By Lemma \ref{Lem:root} the proper transform $\tilde H' \to H$ of $H$ via the blowing up of $\bar J$
 is the blowing up of $\bar J_H^{1/(cc')}$, allowing for induction.

We now inspect the behavior on different charts.  On the $x_1$-chart we have $x_1 = u^{w_1c}$ so the first term becomes  $(x_1^{a_1!}) = E^{a_1!w_1c}\cdot(1)$ and $\inv_{p'} \tilde\cC' = \inv(1) = 0$.\footnote{This reflects the fact that before passing to the coefficient ideal $\ord(\cI')<a_1$ on this chart --- it need not become a unit ideal in general!} This implies that on all other charts it suffices to consider $p'\in \tilde H'\cap E$, as all other points belong to the $x_1$-chart. By the inductive assumption, for such points
we have  $$\inv_{p'}((\bar\cC_{a_1!})') <(a_1-1)!\cdot(a_2,\ldots,a_k).$$




Note that the term $(x_1^{a_1!})$ in $\tilde\cC$ is transformed, via $x_1 = u^{w_1c}x_1'$ to  the form $E^{a_1!w_1c}({x'_1}^{a_1!})$. It follows that $\ord_{p'} (\tilde\cC') \leq a_1!$, and  if $\ord_{p'} (\tilde\cC') < a_1!$ then a fortiori $\inv_{p'} (\tilde\cC') < \inv_{p} (\tilde\cC)$.

If on the other hand $\ord_{p'} (\tilde\cC') = a_1!$ then the variable $x_1'$ is a maximal contact element.
Using the inductive assumption we compute $$\inv_{p'}(({x'}_1^{a_1!}) + (\tilde\cC_{a_1!})') = \left(a_1!,\inv_{p'} ((\bar\cC_{a_1!})')\right) < (a_1!,\inv_{p'}(\bar\cC_{a_1!})) = (a_1-1)!(a_1,\ldots,a_k).$$ Since $\tilde\cC'$ includes this ideal, we obtain again $\inv_{p'} (\tilde\cC') < \inv_{p} (\tilde\cC)$, as claimed.

\ChDan{We deduce that $\inv_{p'}(\cI') < \inv_p(\cI)$ as well. As in \cite[Lemma 3.3]{Bierstone-Milman-funct}, \cite{ATW-principalization,ATW-relative}, we have the inclusions $\cI'^{(a_1-1)!} \subset \tilde\cC' \subset C(\cI',a_1),$\footnote{These are the ``easy" inclusions --- which hold even in the logarithmic situation.} 
hence $\ord_{p'}(\cI') \leq a_1$. We may again assume $x_1'$ is a maximal contact element and $\ord_{p'}(\cI') = a_1$. By Theorem \ref{Th:center-admissible}(2)  $$\inv_{p'}(\cI'^{(a_1-1)!}) \geq \inv_{p'}(\tilde\cC') \geq  \inv_{p'}(C(\cI',a_1)).$$
By Corollary \ref{Cor:invariant-homogeneous} we have $\inv_{p'}(\cI'^{(a_1-1)!}) = \inv_{p'}(C(\cI',a_1))$ giving equalities throughout, hence $$\inv_{p'}(\cI') = \frac{1}{(a_1-1)!} \inv_{p'}(\tilde\cC') < \frac{1}{(a_1-1)!} \inv_p(\tilde\cC) = \inv_p(\cI),$$
as needed.}

 \end{proof}

\begin{proof}[Proof of Theorem \ref{Th:principalization}]
The first paragraph of Theorem \ref{Th:principalization} follows from Theorems  \ref{Th:invariant-drops} and \ref{Th:center-admissible}(3), so we address the second paragraph with the following standard observation. Let $Y_n \to \cdots \to Y$ be the principalization of $\cI\subset \cO_Y$ and $\tilde Y \to Y$ a smooth morphism with $\tilde\cI = \cI\cO_{\tilde Y}$. Then the principalization of $\tilde \cI\subset \cO_{\tilde Y}$ is obtained from  $Y_n \to \cdots \to Y$  by removing empty blowings up.
\end{proof} 

\begin{proof}[Proof of Theorem \ref{maintheorem}] Apply Theorem \ref{Th:principalization}  to $\cI_X$, \ChDan{with the following  standard reduction. At each step one replaces the weak transform $\cI'$ by the proper transform $\cI_{X'} \supset \cI'$, noting that $\maxinv(\cI_{X'}) \leq \maxinv (\cI')$. One stops}  at the point where $\maxinv(\cI_{X_n}) = (1,\ldots,1)$, the sequence of length $c$: at this point the center $J_n$, whose support is contained in $X_n$, is everywhere  of the form $(x_1,\ldots,x_c)$, in particular smooth. Since $X_n$ is of pure codimension $c$, and since $\inv_p(\cI_{X_n}) = (1,\ldots,1)$ at a smooth point of $X_{n}$, we have that the support of $J_n$ contains a dense open in $X_n$, hence they coincide, and $X_n$ is smooth.
\end{proof}

 \section{Two examples}\label{Sec:examples}
 Consider the plane curve $$X = V(x^5 + x^3 y^3 + y^k)$$ with $k\geq 5$. Its resolution depends on whether or not $k\geq 8$.
\subsection{The case $k\geq 8$} This curve is singular at the origin $p$. We have $a_1 = \ord_p(\cI_X) = 5$. Since $\cD^{\leq 4} \cI = (x,y^2)$ we may take $x_1 = x$ and $H = V(x)$.
		A direct computation provides the coefficient ideal $$C(\cI_X,5)|_{H} = (\cD^{\leq 3}(\cI_X)|_H)^{120/2} = (y^{180}),$$ with $b_2 = 180$ and $a_2 = 180/(4!) = 15/2$. Rescaling, we need to take the weighted blowup of $\bar J = (x^{1/3},y^{1/2})$.
		\begin{itemize}
			\item In the $x$-chart we have $x= u^3, y= u^2y'$, giving $$Y'_x = [\Spec k[u,y']/\bmu_3],$$ the action given by $(u,y') \mapsto (\zeta_3u, \zeta_3y')$.
				The equation of $X$ becomes $$u^{15}(1+{y'}^3 + u^{2k-15}{y'}^k),$$ with proper transform $X'_x = V(1+{y'}^3 + u^{2k-15}{y'}^k)$ smooth.
			\item In the $y$-chart we have $y = v^2, x=v^3x'$, giving 	$$Y'_y = [\Spec k[x',v]/\bmu_2],$$ the action given by $(x',v) \mapsto (-x', -v)$. The equation of $X$ becomes $v^{15}({x'}^5 + {x'}^3 + v^{2k-15})$, with proper transform $X'_y = V({x'}^5 + {x'}^3 + v^{2k-15})$.
			
			Note that $X'_y$ is smooth when $k=8$. Otherwise it is singular at the origin with invariant $(3,2k-15)$, which is lexicographically strictly smaller than $(5,15/2)$; A single weighted blowing up resolves the singularity.
		\end{itemize}	
\subsection{The case $k\leq 7 $} 		
		 Consider now the same equation with $k=7$ (the cases $k=5,6$ being similar). We still take $a_1=5, x_1 = x$ and $H = V(x)$. This time $$C(\cI_X)|_{H} = ((\cI_X)|_H)^{120/5} = (y^{168}),$$ with $b_2 = 7\cdot (4!)$ and $a_2 =  7$. We take the weighted blowup of $J = (x^{1/7},y^{1/5})$.
		\begin{itemize}
			\item In the $x$-chart we have $x= u^7, y= u^5y'$, giving
			$$Y'_x = [\Spec k[u,y']/\bmu_7],$$ the action given by $(u,y') \mapsto (\zeta_7u, \zeta_7^{-5}y')$.
				The equation of $X$ becomes $$u^{35}(1+u{y'}^3 + {y'}^7),$$ with proper transform $X'_x = V(1+u{y'}^3 + {y'}^7)$ smooth.
			\item In the $y$-chart we have $y = v^5, x=v^7x'$, giving 	$$Y'_y = [\Spec k[x',v]/\bmu_5],$$ the action given by $(x',v) \mapsto (\zeta_5^{-7}x', \zeta_5v)$. The equation of $X$ becomes $v^{35}({x'}^5 + v{x'}^3 + 1)$, with smooth proper transform $X'_y = V({x'}^5 + v{x'}^3 + 1)$.
		\end{itemize}

\section{Further comments}\label{Sec:comments}


\subsection{Non-embedded resolution} Given two embeddings $X \subset Y_1$ and $X \subset Y_2$ such that $\dim_p(Y_1) = \dim_p(Y_2)$ for all $p\in X$, the two embeddings are \'etale locally equivalent. By functoriality the embedded resolutions of $X \subset Y_1$ and $X \subset Y_2$ are \'etale locally isomorphic, hence the resolutions $X'_1 \to X$ and $X'_2 \to X$ coincide.

Our resolutions also satisfy the re-embedding principle \cite[proposition 2.12.3]{ATW-principalization}: given an embedding $Y \subset Y_1 := Y\times \Spec k[x_0]$ and $\inv_p(\cI_{X \subset Y}) = (a_1,\ldots,a_k)$ with parameters $(x_1,\ldots, x_k)$ we have $\inv_p(\cI_{X \subset Y_1}) = (1,a_1,\ldots,a_k)$ with parameters $(x_0,x_1,\ldots, x_k)$. The proper transform $X'_1$ of $X$ in $Y_1'$ is disjoint from the $x_0$-chart, and on every other chart we have $Y_1' = Y' \times \Spec k[x_0]$ so that $X'_1 = X'$ and induction applies.

Since every pure-dimensional stack can be \'etale locally embedded in pure codimension, we deduce:
\begin{theorem}[Non-embedded resolution]\label{Th:non-embedded}  There is a functor $F_{ner}$ associating to a pure-dimensional reduced stack $X$ of finite type over a characteristic-0 field $k$ a proper, generically representable and  birational morphism $F_{ner}(X) \to X$ with $F_{ner}(X)$ regular. This is functorial for smooth morphisms: if $X_1 \to X$ is smooth then $F_{ner}(X_1) = F_{ner}(X)\times_X X_1$.
\end{theorem}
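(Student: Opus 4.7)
The plan is to reduce to the embedded case by constructing a compatible embedded resolution étale locally, and then descend using functoriality. Since $X$ is of finite type over $k$, étale locally one can find closed embeddings $X|_U \hookrightarrow Y_U$ into a smooth stack $Y_U$ of constant pure relative dimension (for instance, by embedding affine patches into affine space, and choosing the dimension to realize pure codimension $c$ at every point, which is possible because $X$ is pure-dimensional). First I would fix such an étale cover $\{U_i \to X\}$ together with embeddings $X|_{U_i} \hookrightarrow Y_i$ of constant pure codimension $c_i$.

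Next I would apply the embedded resolution functor $F_{er}^{\circ\infty}$ from Theorem \ref{maintheorem} to each pair $X|_{U_i} \subset Y_i$, obtaining a smooth stack $X'_i \subset Y'_i$ proper and birational over $X|_{U_i}$. The essential step is to verify that the induced modifications $X'_i \to X|_{U_i}$ glue to a well-defined modification $F_{ner}(X) \to X$. For this I would use two key inputs: (a) the re-embedding principle, as recalled just before the theorem, which guarantees that enlarging the ambient smooth stack by a factor $\Spec k[x_0]$ does not change the resulting resolution of $X$; and (b) the fact, cited in the paragraph preceding the theorem, that any two embeddings of the same pure codimension into smooth stacks are étale locally equivalent. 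Combining (a) and (b), any two local embedded resolutions become isomorphic after passing to a common étale refinement, so that on overlaps $U_i \times_X U_j$ the resolutions $X'_i$ and $X'_j$ become canonically identified.

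Given this coherent étale-local data, the descent of the $X'_i$ along $\{U_i \to X\}$ produces a Deligne--Mumford stack $F_{ner}(X)$ together with a morphism to $X$ that is proper (since properness is étale local on the target and holds for each $X'_i \to X|_{U_i}$), birational and generically representable (as it restricts to the identity on the smooth locus, where the invariant equals $(1,\dots,1)$ and the algorithm terminates trivially), and with regular total space. Functoriality for smooth morphisms $X_1 \to X$ follows from the corresponding property of $F_{er}^{\circ\infty}$ stated in Theorem \ref{maintheorem}: pulling back an étale embedding of $X$ yields an étale embedding of $X_1$ of the same pure codimension, and $F_{er}^{\circ\infty}$ commutes with smooth base change, so $F_{ner}(X_1) = F_{ner}(X) \times_X X_1$.

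The main obstacle I anticipate is not the construction itself but the verification that all local choices --- cover, embedding dimension, auxiliary parameters --- produce a result well-defined up to canonical isomorphism. The re-embedding principle handles change of embedding dimension, étale local equivalence of equicodimensional embeddings handles change of the specific embedding, and the smooth-functoriality clause of Theorem \ref{maintheorem} handles refinement of the cover; together they yield the cocycle condition needed for étale descent.
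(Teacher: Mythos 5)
Your proposal follows the same route as the paper: embed \'etale locally, apply the embedded resolution functor, and glue using two facts --- the \'etale-local equivalence of equicodimensional embeddings combined with smooth functoriality, and the re-embedding principle to handle change of embedding dimension. The paper's proof is essentially just the two preceding paragraphs stating these ingredients, and your write-up correctly fills in the descent and property-checking details they leave implicit.
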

\ChDan{Of course one can deduce resolution of $X$ which is not pure dimensional, though care is required for functoriality.}

Carefully using Bergh's destackification theorem we also obtain:
\begin{theorem}[Coarse resolution]\label{Th:coarse-resolution}  There is a functor $F_{crs}$ associating to a pure-dimensional reduced stack $X$ of finite type over a characteristic-0 field $k$ a \emph{projective}  birational morphism $F_{crs}(X) \to X$ with $F_{crs}(X)$ regular. This is functorial for smooth morphisms: if $X_1 \to X$ is smooth  then $F_{crs}(X_1) = F_{crs}(X)\times_X X_1$.
\end{theorem}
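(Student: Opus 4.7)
The plan is to refine the non-embedded resolution of Theorem \ref{Th:non-embedded} by appending a destackification step. First, I would apply $F_{ner}$ to $X$ to obtain a smooth Deligne--Mumford stack $\tilde X := F_{ner}(X)$ equipped with a proper birational morphism $\tilde X \to X$, functorial for smooth morphisms. Each constituent of this morphism is a weighted blowing up, hence a relative $\cProj$ of a finitely generated graded algebra, and therefore projective in the stacky sense.

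Next, I would apply Bergh's destackification theorem in the form of \cite[Theorem B]{Bergh-Rydh}. Since we work in characteristic zero, every separated Deligne--Mumford stack is tame, so the hypotheses hold. The output is a canonical sequence of smooth stacky blowings up and root stacks $\tilde X' \to \tilde X$, with $\tilde X'$ smooth, such that its coarse moduli space $X'$ is regular and $\tilde X' \to X'$ is an iterated root stack along a simple normal crossings divisor. Each such operation is itself projective in the stacky sense.

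Now set $F_{crs}(X) := X'$. The morphism $X' \to X$, obtained by taking coarse moduli of the composition $\tilde X' \to X$ (interpreting the target through the coarse moduli of $X$ when $X$ has nontrivial stabilizers), is then projective birational with $X'$ regular. Projectivity is preserved on passage to coarse moduli because, by tameness, a sufficiently divisible tensor power of the relative ample line bundle on $\tilde X'$ descends to $X'$. Functoriality for smooth morphisms follows from the functoriality of $F_{ner}$, the functoriality of destackification in \cite{Bergh-Rydh}, and the compatibility of coarse moduli with flat base change for tame stacks.

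The hard part will be the careful bookkeeping of projectivity throughout the two-stage process, particularly at the final descent to coarse moduli: projectivity of a morphism of stacks does not automatically imply projectivity of the induced morphism on coarse moduli without a judicious choice of ample line bundles at each blowing up. This is precisely the delicacy alluded to by the word ``carefully'' in the theorem's lead-in, and it is dispatched by descending a suitable tensor power of the canonical relatively ample line bundle built up from the $\cProj$ presentations at each step.
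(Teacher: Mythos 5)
Your overall strategy --- compose $F_{ner}$ with Bergh--Rydh destackification --- is the paper's strategy, but you use the wrong version of the coarse space, and this creates a genuine gap. You set $F_{crs}(X) := X'$, the \emph{absolute} coarse moduli space of the destackified stack $\tilde X'$. If $X$ itself has nontrivial generic stabilizers (the theorem allows $X$ to be any pure-dimensional reduced stack), then there is no scheme birational to $X$, so no absolute coarse space can satisfy the conclusion ``projective birational morphism $F_{crs}(X) \to X$''. Your parenthetical ``interpreting the target through the coarse moduli of $X$'' doesn't repair this: it silently replaces the target $X$ by its coarse space, which changes the theorem you are proving and destroys functoriality for smooth morphisms of stacks. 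The paper instead takes the \emph{relative} coarse moduli space $\underline{F_{ner}(X)'}$ of the destackification $F_{ner}(X)'$ over $X$; this removes only the extra stacky structure introduced during resolution while retaining the stabilizers that $X$ already had, so the map to $X$ really is birational, and it is regular and projective over $X$.

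The second, smaller, difference is the form of the destackification theorem invoked. You cite \cite[Theorem B]{Bergh-Rydh} and then spend a paragraph worrying about how to descend ampleness to the coarse space; the paper instead cites the \emph{relative} destackification statement \cite[Theorem 7.1]{Bergh-Rydh}, applied to the tower $F_{ner}(X) \to X \to \Spec k$, which directly asserts that the relative coarse moduli space is projective over $X$ (and regular) with no further work. So the ``delicate bookkeeping of projectivity'' you flag is already packaged into the theorem being cited, once you cite the right one. In short: switch from absolute to relative coarse moduli over $X$, and invoke the relative form of destackification; then the proof closes cleanly.
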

\begin{proof} We apply \cite[Theorem 7.1]{Bergh-Rydh}, using $F_{ner}(X)\to X \to \Spec k$ for $X \to T \to S$ in that theorem. This provides a projective morphism $F_{ner}(X)' \to F_{ner}(X)$, functorial for  smooth morphisms $X_1 \to X$, such that the relative coarse moduli space $F_{ner}(X)' \to \underline{F_{ner}(X)'} \to X$ is projective over $X$, and such that $F_{ner}(X)' $ and $\underline{F_{ner}(X)'}$ are regular. We may take $F_{crs}(X)  = \underline{F_{ner}(X)'}$.
\end{proof}
\subsection{Note on stabilizers} Even though Bergh's destackification is known for tame stacks,  one might wonder about the stabilizers occurring in our resolution. We note, however, that the stabilizers of a weighted blowing up locally embed in $I_Y \times \GG_m$, where $I_Y$ denotes the inertia stack of $Y$. We therefore have that the stabilizers of $Y_n$ locally embed in $I_Y \times \GG_m^n$. In particular, if $Y$ is a scheme then $Y_n$ has abelian inertia, and its coarse moduli space has abelian quotient singularities.
\subsection{Note on exceptional loci}\label{Sec:not-dnc} We show by way of an example that the exceptional loci produced in our algorithm do not necessarily have normal crossings with centers.

Consider $\cI=(x^2yz+yz^4) \subset \CC[x,y,z]$. Then $\maxinv(\cI) = (4,4,4)$ is attained at the origin with center $(x^4,y^4,z^4)$ and reduced center $(x,y,z)$. In the $z$-chart one obtains the ideal $(y_3(x_3^2 + z))$. The new invariant is $(2,2)$ with reduced center $(y_3,x_3^2+z)$, which is tangent to the exceptional $z=0$.

The methods of \cite{ATW-principalization} suggest using the logarithmic derivative in $z$, resulting in the invariant $(3,3,\infty)$ with center $(y_3^3,x_3^3,z^{3/2})$ and reduced Kummer center $(y_3,x_3,z^{1/2})$. This reduces logarithmic invariants respecting logarithmic, hence exceptional, divisors.

\bibliographystyle{amsalpha}
\bibliography{principalization}

\end{document}